\documentclass[11pt,reqno]{amsart}
\usepackage{amssymb, amsmath, amsthm,amsrefs,graphicx,marvosym}
\usepackage{latexsym}
\usepackage{color}
\usepackage{exscale}
\usepackage{bm, bbm, mathrsfs}
\usepackage{hyperref}

\newtheorem{theorem}{Theorem}[section]

\newtheorem{definition}[theorem]{Definition}
\newtheorem{remark}[theorem]{Remark}
\newtheorem{example}[theorem]{Example}

\newtheorem{claim}[theorem]{Claim}

\newtheorem{convention}{Convention}

\newcommand{\RR}{\mathbb R}

\def\vp{\varphi}

\def\s0{{ s_0}}

\def\ts0{{\tilde s_0}}

\def\eq#1{(\ref{#1})}
\def\nn{\nonumber}

\def\({\left(\begin{array}{cccccc}}
\def\){\end{array}\right)}

\def\bes{\begin{eqnarray}}
\def\ees{\end{eqnarray}}

\newcommand{\del}{\partial}

\newcommand{\beq}{\begin{equation}}
\newcommand{\eeq}{\end{equation}}
\newcommand{\bea}{\begin{eqnarray}}
\newcommand{\eea}{\end{eqnarray}}
\newcommand{\baln}{\begin{align}}
\newcommand{\ealn}{\end{align}}
\newcommand{\beann}{\begin{eqnarray*}}
\newcommand{\eeann}{\end{eqnarray*}}
\newcommand{\si}{\ensuremath{\sigma}}

\newcommand{\minus}{\smallsetminus}
\newcommand{\ul}{\underline}

\newcommand{\I}{\ensuremath{\mathcal{I}}}

\newcommand{\pf}{\begin{proof}}
\newcommand{\foorp}{\end{proof}}



\begin{document}

\title{On Two Theorems of Darboux}
\author{Michael Benfield}\address{M.~Benfield, San Diego, CA, \tt{mike.benfield@gmail.com}.}\thanks{M.~Benfield was partially supported by NSF grant
DMS-1311743.}
\author{Helge Kristian Jenssen }\address{ H.~K.~Jenssen, Department of
Mathematics, Penn State University, {\tt
jenssen@math.psu.edu}.}\thanks{ H.~K.~Jenssen was partially supported by NSF grant
DMS-1311353.}
\author{Irina A.\ Kogan}\address{I.~A.~Kogan, Department of Mathematics, North Carolina State University,
     {\tt iakogan@ncsu.edu}.}\thanks{ I.~A.~Kogan was partially supported by NSF grant
DMS-1311743.}
\date{\today}

\begin{abstract}
We provide precise formulations and proofs of two theorems 
from Darboux's lectures on orthogonal systems
\cite{dar}. These results provide local existence and uniqueness of 
solutions to certain types of first order PDE systems where  
each equation contains a single derivative for which it is solved:
\[\frac{\partial u_i}{\partial x_j}(x)=f_{ij}(x,u(x)).\] 
The data prescribe values for the unknowns $u_i$ along certain 
hyperplanes through a given point $\bar x$.

The first theorem applies to determined systems (the number of 
equations equals the number unknowns), and a unique, local 
solution is obtained via Picard iteration. While Darboux's statement 
of the theorem leaves unspecified ``certaines conditions de 
continuit\'e,'' it is clear from his proof that he assumes Lipschitz 
continuity of the maps $f_{ij}$. On the other hand, he did not 
address the regularity of the data. 
We provide a precise formulation and proof of his first theorem.

The second theorem is more involved and applies to overdetermined 
systems of the same general form. Under the appropriate integrability 
conditions, Darboux used his first theorem to treat the cases with 
two and three independent variables.
We provide a proof for any number of independent variables.

While the systems are rather special, they do appear in applications; 
e.g., the second theorem contains the classical Frobenius theorem 
on overdetermined systems as a special case.
The key aspect of the proofs is that they apply to non-analytic 
situations. In an analytic setup the results are covered by the general 
Cartan-K\"ahler theorem.
\end{abstract}

\maketitle
\noindent {\bf Keywords:} Overdetermined systems of PDEs; integrability theorems. 

\noindent {\bf MSC 2010:} 35N10.

\tableofcontents

\section{Introduction}\label{intro}
Darboux, in Chapter I of Livre III in his monograph 
``Syst\`emes Orthogonaux'' \cite{dar}, stated three integrability 
theorems for certain types of first order systems of PDEs.
The second one of these is the classical Frobenius theorem and will 
not be considered in detail in what follows (see Remark \ref{frob_rmk}); 
the remaining ones are his ``Th\'eor\`eme I'' and ``Th\'eor\`eme III'' and
these are the ``two theorems'' in our title.

The theorems give local existence for certain types of first order systems 
of PDEs of the general form
\beq\label{gen_form1}
	u_{i,x_j}(x)=f_{ij}(x,u(x)),
\eeq
where $u$ denotes the vector of unknown functions $u_1,\dots,u_N$, 
the independent variables $x=(x_1,\dots,x_n)$ ranges over an open
set about a fixed point $\bar x\in \RR^n$, and the $f_{ij}$ are given 
scalar functions. 
The data that the systems under consideration are to take on consist 
of given functions prescribed along (typically several) hyperplanes 
through the point $\bar x\in \RR^n$. Darboux makes repeated use of  
Th\'eor\`eme I in his proof of Th\'eor\`eme III.

Concerning regularity, we assume throughout that the functions $f_{ij}$
as well as the assigned data functions are (at least) continuous.
By a ``solution'' we always mean a classical, $C^1$ function $u$ 
which satisfy the PDEs and the data at every point of some 
neighborhood of $\bar x$. 

We note the special character of the equations: each equation 
contains a single derivative with respect to which it is solved. 
As commented by Spivak in \cite{spi5} in connection with the PDE 
formulation of the Frobenius theorem, it is rather laughable to call 
these PDEs at all as they do not relate different partial derivatives 
to each other. Yet, these are basic equations and they are useful. 
Our motivation for revisiting Darboux's results has been their 
application to PDE systems that appear when asking for 
the existence of maps $g:\RR^n\to\RR^n$ whose Jacobian matrix 
has a set of prescribed eigenvector fields \cite{jk1}.

\subsection{Determined systems}\label{det_syst}
The first theorem concerns determined systems, i.e.\ the number $N$ of 
dependent variables $u_i$ equals the number of equations. The data in this
case consists of $N$ prescribed functions along certain hyperplanes through
$\bar x$ (see Section \ref{D_1st_thm} for details).

Darboux employed Picard iteration to establish local existence 
of a solution, and the proof is quite similar to that of the classical Picard-Lindel\"of 
theorem for local existence to Cauchy problems for ODEs \cite{hart}. 
Nevertheless, some care is required to obtain a precise result which is 
explicit about the regularity of the data and the functions $f_{ij}$. In Darboux's
proof the first step is to apply a change of coordinates $x\mapsto y$ and 
$u\mapsto v$ so as to obtain an equivalent PDE system
\[(v_i)_{y_j}(y)=F_{ij}(y,v(y)).\]
The change of variables is made so that the point $\bar x$ corresponds to the origin and 
the data for the $v$-variables vanish identically. As is made explicit in his proof, Darboux 
assumes Lipschitz continuity of the maps $F_{ij}$ (with respect to the dependent 
variables $v$). Next, Darboux sets up the appropriate Picard iteration and 
establishes existence of a local solution $v$. Finally, he establishes uniqueness
by exploiting (in a now-standard fashion) the assumed Lipschitz continuity of the $F_{ij}$.
With that his proof is finished.

However, the following ``detail'' was not addressed by Darboux: the change of 
variables depends on the originally prescribed data, and no regularity 
assumptions are imposed on the data. 
Our first task is to address this issue and provide local existence under 
precise regularity conditions on the original right hand sides $f_{ij}$, as well as on 
the original $u$-data. Different setups are possible; we opt for Lipschitz 
continuity of the $f_{ij}$ (with respect to $u$) and just continuity of the $u$-data. 
The precise statement is given in Theorem \ref{darb_1} below. 

We note two points about this result. First, it provides an easy, ``ODE method'' 
proof for local solutions to a certain type of PDE systems under mild regularity conditions.
Even in an analytic setting, the standard Cauchy-Kowalevskaya 
theorem does not apply as the data are prescribed along several hyperplanes.
(The only exception to this last statement is the case when the given system 
\eq{gen_form1} degenerates to an ODE system, i.e.\ the independent variable 
$x_i$ in \eq{gen_form1} is the same in all equations.)

Concerning the regularity setup we use, other choices are possible.
E.g., it would be of interest to formulate an existence result for 
Carath\'eodory-type solutions for systems \eq{gen_form1}. However, we will not 
pursue this in the present paper.

\subsection{Overdetermined systems}
The second theorem (Darboux's Th\'eor\`eme III) concerns overdetermined 
systems of the same general form \eq{gen_form1}. Under the appropriate 
integrability conditions, which are rather restrictive, it guarantees 
existence of a unique solution near a point $\bar x$, again for data given 
along hyperplanes through $\bar x$.
This result is more challenging to prove, and Darboux limited himself to the 
cases with two or three independent variables. It is not immediate to 
generalize his argument to cases with more independent variables. 
We shall provide a proof by induction on the number of independent 
variables.

Concerning regularity assumptions, we note that a key ingredient in 
both Darboux's and our proof for the second theorem, is the application 
of the first theorem. In particular, it will be applied to the 1st order system 
satisfied by quantities of the form ``LHS\eq{gen_form1} $-$ RHS\eq{gen_form1}.'' 
We will therefore need to require more regularity than for the 
first theorem. Another reason for increased regularity is to make the 
integrability conditions valid.

We note that the second theorem is a consequence of the much
more general Cartan-K\"ahler theorem. However, that theorem depends
of the Cauchy-Kowalevskaya theorem and only applies to the much more 
restrictive setting of analytic systems with analytic data. 
For the precious few results on overdetermined systems which 
do not require analyticity, see \cite{yang} and references therein. 
The work \cite{yang} is formulated in a smooth ($C^\infty$) setting; the proofs 
are not easy. In contrast, the present work employs entirely elementary tools.

\section{Darboux's theorem on determined systems}\label{D_1st_thm}
\subsection{Setup and notation}\label{D_1st_thm_statement}
We consider determined systems of PDEs with the following structure:
\begin{itemize}
\item[(i)] there are $N$ first order PDEs for $N$ unknown functions,
\item[(ii)] each equation contains exactly one derivative and is assumed solved 
for this derivative, and
\item[(iii)] each unknown appears differentiated exactly once.
\end{itemize} 
There is no constraint regarding which first derivatives appear in the equations.
E.g., they could all be with respect to the same independent variable (giving an 
ODE system with parameters) or with respect to some or all of the independent 
variables.

We denote the independent variables by $x=(x_1,\dots,x_n)$ and the dependent 
variables by $u=(u_1,\dots,u_N)\in \RR^N$. For each $i$, $1\leq i\leq n$,
let $u_{i1},\dots,u_{iN_i}$ ($0\leq N_i\leq N$) denote the 
dependent variables that appear differentiated with respect to $x_i$ in the system.
Note that we allow for the possibility that $N_i=0$, i.e.\ none of the equations 
involve differentiation with respect to $x_i$, in which case $x_i$ plays the 
role of a parameter. 
According to assumption (iii) we have that 
\beq\label{N}
	N=\sum_{i=1}^n N_i,
\eeq
and we relabel coordinates so as to write $u$ in the form
\beq\label{u_compns}
	u=(u_1,\dots,u_N)=(u_{11},\dots,u_{1N_1},u_{21},\dots,u_{2N_2},
	\dots,u_{n1},\dots,u_{nN_n}). 
\eeq
The PDEs involving differentiation with respect to $x_i$ then appear consecutively,
and we write them as
\beq\label{sys_i}
	\frac{\del u_{ih}}{\del x_i} = f_{ih}(x_1,\dots,x_n,u_1,\dots,u_N).
\eeq
Here and below it is understood that the index $i$ ranges between $1$ 
and $n$, and that for each such $i$ the index $h$ ranges between $1$
and $N_i$ (unless $N_i=0$).

We next describe the data for the system. For a given point $\bar x\in \RR^n$ and for 
each $i$, we prescribe $N_i$ functions $\varphi_{ih}$ of $n-1$ arguments,
to which the unknown function $u$ should reduce when restricted to the 
hyperplane $\{x_i=\bar x_i\}$ through $\bar x$.
We use the following notation: 
for any $x\in \RR^n$ and $\xi\in\RR$ set
\beq\label{notn_1}
	x\,'^{\, i}:=(x_1,\dots,x_{i-1},x_{i+1},\dots,x_n)\in\RR^{n-1}
\eeq
and
\beq\label{notn_2}
	x\,'^{\, i}_\xi:=(x_1,\dots,x_{i-1},\xi,x_{i+1},\dots,x_n)\in\RR^{n}.
\eeq
The data requirement is then that each component $u_{ih}$ satisfy 
\beq\label{Ddata}
	u_{ih}(x\,'^{\, i}_{\bar x_i})
	=\varphi_{ih}(x\,'^{\, i}).
\eeq
We define the point $\bar\vp\in\RR^N$ by giving its components according 
to \eq{u_compns} as
\beq\label{phi_bar}
	\bar\vp_{ih}:=\vp_{ih}(\bar x\,'^{\, i}).
\eeq
Of course, it is assumed that each function $\vp_{ih}$ is defined on a 
neighborhood of $\bar x\,'^{\, i}$, and that each function $f_{ih}$ is defined 
on a neighborhood of $(\bar x,\bar \vp)$.
For convenience we shall equip any Euclidean space $\RR^k$ under consideration 
with the $1$-norm
\[|y|:=\sum_{j=1}^k|y_i|,\qquad y\in \RR^k,\]
and let $B_r(y)$ denote the corresponding closed ball
\[B_r(y):=\{z\in\RR^k\,:\,|z|\leq r\}.\]

\begin{remark}
	A special case is that all dependent variables appear 
	differentiated with respect to the same independent variable, $x_1$ say.
	In this case the system reduces to a system of ODEs with parameters $x_2,\dots,x_n$,
	and the data consist of $N$ functions of $x_2,\dots,x_n$ which the solution 
	should reduce to when $x_1=\bar x_1$. If in addition $x_1$ is the only 
	independent variable then we would prescribe $N$ constants as data at 
	$x_1=\bar x_1$ (pure ODE case).
\end{remark}

Before formulating and proving Darboux's first theorem we 
include a representative example.

\begin{example}\label{example5}
	Consider the system
	\begin{align}
		u_x &= f(x,y,z,u,v,w,\xi)\label{51}\\
		v_x &= g(x,y,z,u,v,w,\xi)\label{52}\\
		w_y &= h(x,y,z,u,v,w,\xi)\label{53}\\
		\xi_y &= k(x,y,z,u,v,w,\xi)\,,\label{54}
	\end{align}
	with independent variables $x_1=x$, $x_2=y$, $x_3=z$, and dependent variables
	$u_{11}=u$, $u_{12}=v$, $u_{21}=w$, $u_{22}=\xi$. (Here $n=3$, $N=4$, $N_1=2$, $N_2=2$,
	$N_3=0$,
	and $f_{11}=f$, $f_{12}=g$, $f_{21}=h$, $f_{22}=k$.)  The type of data considered in Darboux's first
	theorem are given as follows: near a given point $(\bar x,\bar y)\in\RR^2$ we prescribe four
	functions $\varphi_{11}(y,z)$, $\varphi_{12}(y,z)$, $\varphi_{21}(x,z)$, and $\varphi_{22}(x,z)$.
	Darboux's first theorem then guarantees the existence and uniqueness of a solution 
	$(u(x,y),v(x,y),w(x,y))$ to the system \eq{51}-\eq{54} near $(\bar x,\bar y)$ 
	which satisfies 
	\begin{align}
		u(\bar x,y,z) &= \varphi_{11}(y,z)\\
		v(\bar x,y,z) &= \varphi_{12}(y,z)\\
		w(x, \bar y,z) &= \varphi_{21}(x,z)\\
		\xi(x, \bar y,z) &= \varphi_{22}(x,z)\,.
	\end{align}
	In this case the $z$-variable plays the role of a parameter.
\end{example}

\subsection{Statement and proof of Darboux's first theorem} 
Recall that by a ``solution'' we mean a classical, $C^1$ solution satisfying 
the PDEs in \eq{sys_i}, as well as the data requirements \eq{Ddata}, in a 
pointwise manner.

\begin{theorem}\label{darb_1}
With the notations introduced above, assume there 
exist numbers $a,\, b,\, L>0$ such that the functions $f_{ih}$ and 
$\vp_{ih}$ ($1\leq i\leq n$, $1\leq h\leq N_i$)  satisfy:
\begin{itemize}
	\item[(A1)] Each $f_{ih}$ maps 
	$B_{a,b}:=B_a(\bar x)\times B_b(\bar \vp)$
	continuously into $\RR$, with
	\beq\label{f_lip}
		\qquad |f_{ih}(x,u)-f_{ih}(x,v)|\leq L|u-v| \qquad \text{for $(x,u),(x,v)\in B_{a,b}$.}
	\eeq
	\item[(A2)] Each $\vp_{ih}$ maps $B_a(\bar x\,'^{\, i})$ continuously 
	into $\RR$, with
	\beq\label{phi_cont}
		|\vp_{ih}(x\,'^{\, i})-\bar\vp_{ih}|\leq \textstyle\frac{b}{2N} \qquad \text{for $x\,'^{\, i}\in B_a(\bar x\,'^{\, i})$}.
	\eeq
	\end{itemize}
	Then, with 
	\[M:=\max_{i,h}\sup\{|f_{ih}(x,u)|\,:\, (x,u)\in B_{a,b}\} \quad\text{and}\quad
	\sigma:=\min\big(a,\textstyle\frac{b}{2MN}\big),\]
	the PDE system
	\beq\label{Syst1}
		\frac{\del u_{ih}}{\del x_i} = f_{ih}(x_1,\dots,x_n,u_1,\dots,u_N),
	\eeq
	with data
	\beq\label{Data1}
		u_{ih}(x\,'^{\, i}_{\bar x_i})=\varphi_{ih}(x\,'^{\, i}),
	\eeq
	has a unique solution $u(x)$ defined on $B_\sigma(\bar x)$. 
\end{theorem}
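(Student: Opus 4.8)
The plan is to recast the boundary-value problem \eqref{Syst1}--\eqref{Data1} as a fixed-point equation and solve it by the Banach contraction principle (i.e.\ Picard iteration). The starting observation is an equivalence: a continuous $u$ satisfies the $(i,h)$-th PDE together with its data \eqref{Data1} if and only if it satisfies the integral equation obtained by integrating $\del u_{ih}/\del x_i = f_{ih}$ in the $x_i$-direction out of the hyperplane $\{x_i=\bar x_i\}$,
\[
  u_{ih}(x) = \vp_{ih}(x\,'^{\,i}) + \int_{\bar x_i}^{x_i} f_{ih}\big(x\,'^{\,i}_t,\,u(x\,'^{\,i}_t)\big)\,dt .
\]
The forward implication is just integration; the reverse uses the fundamental theorem of calculus together with continuity of $t\mapsto f_{ih}(x\,'^{\,i}_t,u(x\,'^{\,i}_t))$. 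Writing the right-hand side as $(Tu)_{ih}(x)$, a solution is exactly a fixed point of $T$, so the task reduces to producing a unique fixed point.

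Next I would fix the function space. Let $X$ be the set of continuous $u\colon B_\sigma(\bar x)\to\RR^N$ with $|u(x)-\bar\vp|\le b$ for all $x$; this is a closed subset of the Banach space $C(B_\sigma(\bar x),\RR^N)$ under the sup norm, hence complete. To verify $T(X)\subseteq X$, note that $Tu$ is continuous and estimate each component using (A2) and the definitions of $M$ and $\sigma$:
\[
  |(Tu)_{ih}(x)-\bar\vp_{ih}| \le |\vp_{ih}(x\,'^{\,i})-\bar\vp_{ih}| + M\,|x_i-\bar x_i| \le \tfrac{b}{2N}+M\sigma \le \tfrac{b}{N},
\]
since $|x_i-\bar x_i|\le\sigma\le b/(2MN)$. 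Summing the $N$ components in the $1$-norm gives $|Tu(x)-\bar\vp|\le b$, so $Tu\in X$.

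The main obstacle is the contraction step. The naive sup-norm estimate $|(Tu)(x)-(Tv)(x)|\le NL\sigma\,\|u-v\|_\infty$ only contracts when $NL\sigma<1$, which the prescribed $\sigma$ does not guarantee; moreover different components integrate in \emph{different} coordinate directions, so there is no single ``time'' variable along which to iterate. I would resolve both issues at once with a Bielecki-type weighted norm adapted to the $1$-norm, $\|u\|_\lambda:=\sup_x|u(x)|\,e^{-\lambda|x-\bar x|}$, which is equivalent to the sup norm on the compact ball (so $X$ stays complete). The key point is that in the $1$-norm
\[
  |x\,'^{\,i}_t-\bar x| = |x-\bar x|-|x_i-\bar x_i|+|t-\bar x_i| ,
\]
which separates the $i$-th coordinate additively; after inserting $|u(x\,'^{\,i}_t)-v(x\,'^{\,i}_t)|\le\|u-v\|_\lambda\,e^{\lambda|x\,'^{\,i}_t-\bar x|}$ and carrying out the one-dimensional integral $\int_{\bar x_i}^{x_i}e^{\lambda|t-\bar x_i|}\,dt\le\lambda^{-1}e^{\lambda|x_i-\bar x_i|}$, the two exponentials recombine into $e^{\lambda|x-\bar x|}$. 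Summing the $N$ components yields $\|Tu-Tv\|_\lambda\le(NL/\lambda)\|u-v\|_\lambda$, so choosing $\lambda>NL$ makes $T$ a contraction and the contraction principle produces a unique fixed point $u\in X$. (Alternatively, one could track the Picard iterates directly and establish factorial decay of $\|u^{(k+1)}-u^{(k)}\|$, but the weighted norm is cleaner given the mixed directions.)

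Finally I would confirm that this fixed point is a genuine solution and attend to the regularity bookkeeping, which is where care rather than cleverness is needed. From the integral equation and continuity of $x\mapsto f_{ih}(x,u(x))$, the fundamental theorem of calculus shows that $\del u_{ih}/\del x_i$ exists, is continuous, and equals $f_{ih}(x,u(x))$, and the data \eqref{Data1} holds by construction; uniqueness in $X$ is the uniqueness of the fixed point, and it extends to all continuous solutions since any such solution solves the integral equation. The delicate point to state precisely is which derivatives the solution concept actually demands: since only continuity is assumed of the data $\vp_{ih}$, the transverse derivatives $\del u_{ih}/\del x_j$ with $j\ne i$ need not exist, so the verification should be organized around the derivatives appearing in the system together with continuity of $u$, and I would phrase the achieved regularity accordingly.
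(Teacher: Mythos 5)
Your proposal is correct and follows essentially the same route as the paper's proof: recast the problem as the integral fixed-point equation $u=\Phi[u]$, work on a complete set of continuous functions with values in $B_b(\bar\vp)$, and use a Bielecki-type weighted sup-metric together with the additive splitting of the $1$-norm, $|x\,'^{\, i}_t-\bar x|=|x-\bar x|-|x_i-\bar x_i|+|t-\bar x_i|$, to obtain a contraction (the paper fixes the weight $K=2LN$ and centers its set at $\vp(\cdot)$ with radius $b/2$ rather than at $\bar\vp$ with radius $b$; both choices are immaterial). Your closing remark on regularity is also apt: with merely continuous data the fixed point need not be differentiable in the transverse directions $x_j$, $j\neq i$, so the solution concept must indeed be read as requiring only the derivatives that actually appear in the system.
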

\begin{remark}
	Given $f_{ih}$ satisfying (A1) and continuous $\vp_{ih}$, we can 
	satisfy \eq{phi_cont} by reducing $a$ if necessary.
\end{remark} 
\begin{proof}
	We follow the standard procedure of realizing the solution as 
	the fixed point of a map defined on a suitable space of continuous 
	functions. For this, define the function $\vp:B_a(\bar x)\to\RR^N$ 
	by giving its components according to \eq{u_compns} as 
	\[\vp(x)_{ih}:=\vp_{ih}(x\,'^{\, i}),\]
	and then set
	\beq\label{X}
		X:=\left\{u\in C\!\left(B_\si(\bar x);\RR^N\right)\,:\, |u(x)-\vp(x)|\leq \textstyle\frac{b}{2}
		\quad\forall x\in B_\si(\bar x)\right\}.
	\eeq
	Equipped with $\sup$-metric the set $X$ is complete; for convenience we equip 
	$X$ with the following equivalent metric $d$: 
	\beq\label{d}
		d(u,v):=\sup_{x\in B_\si(\bar x)} e^{-K|x-\bar x|}|u(x)-v(x)|,\qquad\text{for $u, v\in X$},
	\eeq
	where 
	\beq\label{K}
		K:=2LN.
	\eeq
	On the complete metric space $(X,d)$ we define the functional map
	$u\mapsto \Phi[u]$ by giving its components according to \eq{u_compns}: 
	for $u\in X$ and $x\in B_\si(\bar x)$, set
        \beq\label{Phi}
            	\Phi[u]_{ih}(x):=\vp_{ih}(x\,'^{\, i})
            	+\int_{\bar x_i}^{x_i} f_{ih}(x\,'^{\, i}_\xi,u(x\,'^{\, i}_\xi))\, d\xi.
        \eeq
The first thing to verify is that $\Phi[u](x)$ is in fact well-defined whenever 
$u\in X$ and $x\in B_\si(\bar x)$. The first term on RHS\eq{Phi}
is defined since $\si\leq a$. For the integral on RHS\eq{Phi} we 
note that whenever $\xi$ is between $\bar x_i$ and $x_i$, then 
\beq\label{intermed}
	|x\,'^{\, i}_\xi-\bar x|\leq |x-\bar x|\leq \si\leq a,
\eeq
and 
\begin{align*}
	|u(x\,'^{\, i}_\xi)-\bar\vp|&\leq |u(x\,'^{\, i}_\xi)-\vp(x\,'^{\, i}_\xi)|+|\vp(x\,'^{\, i}_\xi)-\bar\vp|\\
	&\leq \textstyle\frac{b}{2}+\sum_{i,h}|\vp_{ih}(x\,'^{\, i})-\bar \vp_{ih}|\qquad\text{(since $u\in X$)}\\
	&\leq \textstyle\frac{b}{2}+N\cdot\textstyle\frac{b}{2N}=b.\qquad\qquad\qquad\text{(by (A2))}\\
\end{align*}
Thus, whenever $\xi$ is between $\bar x_i$ and $x_i$, and $u\in X$, then 
\[(x\,'^{\, i}_\xi,u(x\,'^{\, i}_\xi))\in B_{a,b},\]
such that $f_{ih}(x\,'^{\, i}_\xi,u(x\,'^{\, i}_\xi))$ is defined according to (A1). This shows that 
each $\Phi[u]_{ih}(x)$ is defined whenever $x\in B_\si(\bar x)$ and $u\in X$.

Next, we show that $\Phi$ maps $X$ into itself. For $x$ and $u$ as above, clearly
$\Phi[u]$ is a continuous map. Furthermore,
\begin{align*}
	|\Phi[u](x)-\vp(x)|&= \sum_{i,h}|\Phi[u]_{ih}(x)-\vp_{ih}( x\,'^{\, i})|
	= \sum_{i,h}\big|\int_{\bar x_i}^{x_i} f_{ih}(x\,'^{\, i}_\xi,u(x\,'^{\, i}_\xi))\, d\xi\big|\\
	&\leq M\cdot \sum_{i,h}|x_i-\bar x_i|\leq MN|x-\bar x|\leq MN\si \leq \textstyle\frac{b}{2},
\end{align*}
which shows that $\Phi[u]$ belongs to $X$ whenever $u\in X$.

Finally, we argue that $\Phi:X\to X$ is a strict contraction. For this assume 
$u$, $v\in X$ and $x\in B_\si(\bar x)$, and compute:
\begin{align*}
	|\Phi[u](x)-\Phi[v](x)|&= \sum_{i,h}|\Phi[u]_{ih}(x)-\Phi[v]_{ih}(x)|\\
	&= \sum_{i=1}^n\sum_{h=1}^{N_i}\big|\int_{\bar x_i}^{x_i} f_{ih}(x\,'^{\, i}_\xi,u(x\,'^{\, i}_\xi))
	-f_{ih}(x\,'^{\, i}_\xi,v(x\,'^{\, i}_\xi))\, d\xi\big|\\
	&\leq \sum_{i=1}^n N_iL\cdot \int_{\bar x_i\wedge x_i}^{\bar x_i\vee x_i}|u(x\,'^{\, i}_\xi)-v(x\,'^{\, i}_\xi)|\, d\xi\\
	&\leq L\cdot d(u,v) \cdot \sum_{i=1}^n N_i\int_{\bar x_i\wedge x_i}^{\bar x_i\vee x_i}e^{K|x\,'^{\, i}_\xi-\bar x|}\, d\xi.
\end{align*}
(Here $\wedge$ and $\vee$ denote ``minimum'' and ``maximum,'' respectively.)
From this and the choice \eq{K} for $K$ we obtain that (recall $|\cdot|$ denotes  
1-norm and that $N$ is given by \eq{N})
\begin{align*}
	e^{-K|x-\bar x|}|\Phi[u](x)-\Phi[v](x)|&\leq 
	L\cdot d(u,v)\cdot  \sum_{i=1}^n N_ie^{-K|x_i-\bar x_i|}\int_{\bar x_i\wedge x_i}^{\bar x_i\vee x_i}e^{K|\xi-\bar x_i|}\, d\xi\\
	&= L\cdot d(u,v)\cdot  \sum_{i=1}^n N_ie^{-K|x_i-\bar x_i|}\frac{1}{K}\big(e^{K|x_i-\bar x_i|}-1\big)\\
	&< L\cdot d(u,v)\cdot \frac{N}{K}=\textstyle\frac{1}{2}d(u,v).
\end{align*}
It follows from \eq{d} that 
\[d(\Phi[u],\Phi[v])\leq\textstyle\frac{1}{2}d(u,v),\]
i.e.\ $\Phi:X\to X$ is a strict contraction. 
According to the contraction mapping theorem
$\Phi$ has a unique fixed point $u\in X$, i.e.
\[u_{ih}(x)=\vp_{ih}(x\,'^{\, i})+\int_{\bar x_i}^{x_i} f_{ih}(x\,'^{\, i}_\xi,u(x\,'^{\, i}_\xi))\, d\xi\]
for each $1\leq i\leq n$, $1\leq h\leq N_i$. It is immediate that the PDEs in \eq{Syst1} are
satisfied, and that $u$ satisfies the data requirements in \eq{Data1}.

Finally, uniqueness follows from the fact that any solution (in the classical, pointwise sense 
that we consider) is a fixed point of $\Phi$, together with the uniqueness of such fixed points.
\end{proof}

\section{Darboux's theorem on overdetermined systems}\label{darb_3_prelims}
\subsection{Preliminaries}\label{prelims}
By ``Darboux's theorem on overdetermined systems'' we mean 
Th\'eor\`eme III of Chapter I of Livre III in \cite{dar}. 
From here on we refer to this simply as ``Darboux's theorem,'' while 
Theorem \ref{darb_1} above will be called ``Darboux's first theorem.''

Darboux's theorem  requires more work to state and prove. In this 
section we first describe the class of systems 
under consideration, and then give two examples. Finally, we comment 
on Darboux's original treatment which provided a proof for the cases of 
two and three independent variables. Then, in Section \ref{darb_3}, we 
introduce the class of ``Darboux systems'' and formulate Darboux's 
theorem for these. The proof of the existence part of the theorem proceeds
by induction on the number of independent variables. To highlight its structure 
we outline the argument for the case $n=3$ in Section \ref{darb_n=3}. 
The detailed proof  for  an arbitrary number of independent variables is carried out 
in Section \ref{gen_case_proof}.

\begin{convention}\label{conv_1}
       	In what follows ``an unknown'' refers to a dependent variable 
	that is to be solved for. We allow for an unknown to be a vector
	of unknown, scalar functions.
	The independent variables are denoted by $x=(x_1,\dots,x_n)\in \RR^n$. 
\end{convention}
We now change notation slightly from earlier and let $u$ denote an unknown, 
while $U$ will denote a list of all unknowns. The system we consider 
prescribes certain (possibly more than one) partials $u_{x_i}$ in terms 
of $x$ and some of the other unknowns (possibly all or none of them). 
In particular, the systems we consider are assumed to be closed in the sense that 
each unknown appears differentiated with respect to at least one $x_i$.
Thus, each equation has the general form
\beq\label{gen_form_1}
	u_{x_i}(x)=F^{u}_i(x;U(x)),
\eeq
where $F^u_i$ is a given function. 
Whenever \eq{gen_form_1} appears in the given system, we say that the system
{\em prescribes} $u_{x_i}$.
At this stage, before we consider integrability 
conditions, each $F^{u}_i$ may depend on any collection of unknowns. 
Without loss of generality we make the following:
\begin{convention}\label{conv_2}
        In writing down the equations \eq{gen_form_1}, it is assumed that all 
        scalar unknowns for which the system \eq{gen_form_1} prescribes 
        exactly the same partials, are already grouped together in a single 
        vector $u$ of unknowns. 
\end{convention}
According to this convention, for a given system of the form \eq{gen_form_1}, 
there is a one-to-one correspondence between the set of unknowns $U$ and 
a certain set $\I$ of strictly increasing multi-indices over $\{1,\dots,n\}$. Namely,
for each unknown $u$ in \eq{gen_form_1} we can associate a unique, strictly 
increasing multi-index $I_u:=(i_1,\dots,i_m)$ ($1\leq i_1<\cdots<i_m\leq n$) 
with the property that the given system \eq{gen_form_1} prescribes exactly 
the partials $u_{x_{i_1}},\dots,u_{x_{i_m}}$. If $I_u$ only contains one index $i$,
we write $I_u=(i)$.

The set of multi-indices $\I$ is available once the system \eq{gen_form_1} is 
given, and it is convenient to use $\I$ to index the unknowns.
Thus, for each $I\in\I$ we let $u^I$ denote the (according to Convention \ref{conv_2}, 
unique) unknown for which \eq{gen_form_1} prescribes exactly the partials 
$u_{x_i}$, $i\in I$.\footnote{Here and below we use the shorthand notation $i\in I$
to mean that $i$ is one of the indices in $I$.} 
With this notation, and upon renaming the right-hand sides in 
\eq{gen_form_1}, we obtain a system of the following form: 
for each $I\in \I$ the unknown $u^I$ satisfies the equations
\beq\label{gen_form_2}
	u^I_{x_i}(x)=F^I_i(x;U(x)) \qquad \text{for each $i\in I$.}
\eeq

To describe the data for the system \eq{gen_form_2}, let $\bar x\in \RR^n$ 
be a given point and assume $u^I$ is an unknown. With $I=(i_1,\dots,i_m)$, 
we let $I^c$ denote the strictly increasing multi-index of indices $i$ not 
belonging to $I$, we let
\[x_I:=(x_{i_1},\dots,x_{i_m}),\]
and similarly define $x_{I^c}$.
We then give data that prescribe the unknown
$u^I$ along the hyperplane through $\bar x$ spanned by those direction for which the 
system does not prescribe its partials. That is, we require 
\beq\label{orig_gen_data}
	u^I(x)|_{x_I=\bar x_I}=\bar u^I(x_{I^c}),
\eeq
where $\bar u^I$ is a given function.

For convenience we introduce the following terminology.
\begin{definition}\label{ov_cl_max}
	The system \eq{gen_form_2} is said to be {\em overdetermined} 
	provided that there is at least one unknown for which more than 
	one partial derivative is prescribed by the system.
\end{definition}
From now on it is assumed that the given system \eq{gen_form_2} 
under consideration is overdetermined. (If it is not then we are in a 
situation covered by Darboux's 1st theorem.) 
For an overdetermined system we need to impose integrability conditions, 
and these will put constraints on which unknowns the functions $F^I_i$ 
on the right hand side of \eq{gen_form_2} can 
depend on. This will be detailed in Section \ref{darb_3} below. We first
consider the simplest situations where Darboux's theorem applies.

\subsection{Two examples}\label{examples}
We consider two examples with $n=2$ independent variables.
For brevity we do not employ the index notation introduced above.
\begin{example}\label{3rd_ex1}
	The simplest situation where Darboux's theorem applies 
	is the following: 3 equations for 2 (scalar or vector) unknowns in 2 independent 
	variables. Let the unknowns be $u$ and $w$, let
	the independent variables be $x$ and $y$, and assume that the 
	equations are
	\begin{align}
		u_x &= F(x,y,u,w)\label{3rd_ex1_1}\\
		w_x &= f(x,y,w)\label{3rd_ex1_2}\\
		w_y &= \vp(x,y,u,w)\,.\label{3rd_ex1_3}
	\end{align}
	The data take the form
	\begin{align}
		u(\bar x,y) &= \bar u(y)\label{3rd_ex1_4}\\
		w(\bar x,\bar y) &= \bar w,\label{3rd_ex1_5}
	\end{align}
	where $\bar u$ is a given function and $\bar w$ is a given constant.
	Note that $f$ does not depend explicitly on $u$. This is a consequence of the
	single integrability condition in this case: ``$(w_x)_y=(w_y)_x$'' 
	or $\del_y [f(x,y,u,w)]=\del_x [\vp(x,y,u,w)]$.  The 
	requirement that this last equation, after applying the chain rule, should involve 
	only partials of $u$ or $w$ that are prescribed by \eq{3rd_ex1_1}-\eq{3rd_ex1_3}, 
	implies that $f$ must be independent of $u$. Indeed, the only 
	$u_y$-term is $f_uu_y$, and as $u_y$ is not prescribed by the system, we 
	need to assume that $f_u\equiv 0$.
	Substituting from \eq{3rd_ex1_2}-\eq{3rd_ex1_3}, we obtain the requirement that  
	\[f_y+f_w\vp=\vp_x+\vp_uF+\vp_wf.\]
	(If $w$, say, is a vector of unknowns, then $f_w$ denotes the Jacobian
	matrix of $f$ with respect to $w$.)
	This integrability condition need to be imposed as an identity in $(x,y,u,w)$, i.e.\
	we require that
	\begin{align}
		&f_y(x,y,w)+f_w(x,y,w)\vp(x,y,u,w)\nn\\
		&\quad = \vp_x(x,y,u,w)+\vp_u(x,y,u,w)F(x,y,u,w)
		+\vp_w(x,y,u,w)f(x,y,w),\nn
	\end{align}
	for all $(x,y,u,w)\approx (\bar x,\bar y,\bar u(\bar y), \bar w)$.
	Under this condition, and some mild regularity assumptions, Darboux's theorem 
	will guarantee the existence of a unique local solution $(u(x,y),w(x,y))$ 
	to \eq{3rd_ex1_1}-\eq{3rd_ex1_3} near $(\bar x,\bar y)$ that takes on the data 
	\eq{3rd_ex1_4}-\eq{3rd_ex1_5}.
\end{example}
Note that the system \eq{3rd_ex1_1}-\eq{3rd_ex1_3} is not ``maximal'' in 
the sense that, while $y$ is an independent variable, there is no unknown 
that appears differentiated with respect to only $y$. The next example 
considers the simplest case of a maximal system.
\begin{example}\label{3rd_ex2}
	Consider a system with 4 equations for 3 (scalar or vector) unknowns in 
	2 independent variables: let the unknowns be $u$, $v$, $w$, let the 
	independent variables be $x$ and $y$, and assume that the equations are
	\begin{align}
	u_x &= F(x,y,u,v,w)\label{3rd_case1_1}\\
	v_y &= \Phi(x,y,u,v,w)\label{3rd_case1_2}\\
	w_x &= f(x,y,v,w)\label{3rd_case1_3}\\
	w_y &= \vp(x,y,u,w)\,.\label{3rd_case1_4}
	\end{align}
	The data are prescribed near a point $(\bar x,\bar y)$ and take the following form:
	\begin{align}
	        	u(\bar x,y) &= \bar u(y)\label{3rd_case1_1_data}\\
        		v(x,\bar y) &= \bar v(x)\label{3rd_case1_2_data}\\
        		w(\bar x,\bar y) &= \bar w\label{3rd_case1_3_data},
	\end{align}
        where $\bar u(y)$ is defined near $\bar y$, $\bar v(x)$ is defined 
        near $\bar x$, and $\bar w$ is a  constant. 
        
	As in Example \ref{3rd_ex1}, the requirement that the  condition 
        ``$(w_x)_y=(w_y)_x$'' should not involve the unspecified partials 
        $u_y$ and $v_x$, implies that $f$ is independent of $u$ and $\vp$
        is independent of $v$. Substituting from the system we obtain that 
        \beq\label{int_condn}
	        	f_y+f_v\Phi+f_w\vp=\vp_x+\vp_uF+\vp_wf
        \eeq
	should hold as an identity for all $(x,y,u,v,w)\approx 
	(\bar x,\bar y,\varphi_{1}(\bar y),\varphi_{2}(\bar x),\bar \varphi_{3})$.
        Then, under suitable regularity assumptions, Darboux's theorem 
        guarantees the existence of a unique, local solution 
        $(u(x,y),v(x,y),w(x,y))$ to \eq{3rd_case1_1}-\eq{3rd_case1_4} 
        near the point $(\bar x,\bar y)$ taking on the data \eq{3rd_case1_1_data}-
        \eq{3rd_case1_3_data}.
\end{example}

\subsection{Comments on Darboux's formulation and proofs}\label{comments}
While Darboux \cite{dar} stated his theorem (Theorem \ref{d_thm} below) 
for any number $n$ of independent 
variables, he provided proofs only for the cases $n=2$ and $n=3$. 
In fact, by letting the unknowns $u$, $v$, $w$ in Example \ref{3rd_ex2} denote
vectors, we obtain precisely the setting that Darboux considered for $n=2$. 
(Darboux worked at the level of individual, scalar unknowns, which required an
extra layer of indices.)

Concerning Darboux's proofs, we note that his proof for the $n=2$ case 
makes use of his first theorem, i.e.\ Theorem \ref{darb_1} above. 
His proof for the $n=3$ case then makes use of both the result for $n=2$, 
as well as his first theorem. This indicates
that one can obtain a proof for any number of independent variables via
an induction argument. This is what we do below. In particular, we have 
not been able to provide a ``direct'' proof along the same lines as in the 
proof of his first theorem (Theorem \ref{darb_1} above). 

Let's elaborate. For an overdetermined system satisfying the 
assumptions of Darboux's theorem, it is easy enough 
to define a suitable functional 
map (corresponding to $\Phi$ in the proof of Theorem 
\ref{darb_1}), with the property that the sought-for solution is a fixed point of the 
functional map. However, we have not been able to give a proof for 
the required, opposite implication: if $U$ is fixed point of the 
functional map, then $U$ is the sought-for solution.

Instead, for the existence part, we shall extend Darboux's strategy for 
$n=2$ and $n=3$: induction on the number of independent variables, 
and repeated use of his first theorem. Finally, uniqueness will follow from the 
inductive construction we employ.

\section{Formulation of Darboux's theorem}\label{darb_3}
We now return to the general system \eq{gen_form_2}, which is 
assumed to be overdetermined. Below we describe the 
relevant integrability conditions and introduce the resulting class 
of ``Darboux systems.'' We then state Darboux's theorem which 
amounts to the unique, local solvability of such systems under 
certain mild regularity conditions.
\subsection{Integrability conditions}\label{gen_int_conds}
Let the system \eq{gen_form_2} be given and let $\I$ denote the 
corresponding set of multi-indices $I$ as detailed in Section \ref{prelims}.
If $u^I(x)$ is part of a solution $U(x)$ to \eq{gen_form_2}, and 
$i,j\in I$ with $i\neq j$, then we must have that  
$(u^I_{x_i})_{x_j}=(u^I_{x_j})_{x_i}$. 
With the notation above this amounts to having
\begin{align}
	&F^I_{i,x_j}(x;U(x))+\sum_{J\in\I} F^I_{i,u^J}(x;U(x))u^J_{x_j}(x)\nn\\
	&=F^I_{j,x_i}(x;U(x))+\sum_{J\in\I} F^I_{j,u^J}(x;U(x))u^J_{x_i}(x),
	\label{int_cond1}
\end{align}
Here $F^I_{i,x_j}$ denotes the partial derivative of $F^I_i$ with respect to $x_j$,
while $F^I_{i,u^J}$ denotes the Jacobian matrix of $F^I_i$ with respect to $u^J$.
For \eq{int_cond1} to hold we must require that:
\begin{itemize}
	\item[(i)] all partials of unknowns $u^J$ appearing in \eq{int_cond1} 
	are also prescribed by the original system \eq{gen_form_2}, and that
	\item[(ii)] upon substitution from the original system for these, 
	an identity in $(x,U)$ is obtained.
\end{itemize}
As in the examples above, the condition in (i) places constraints on which 
unknowns each $F^I_i$ can depend on. Namely, we need to require that 
$F^I_i$ depends explicitly on (at most) those unknowns $u^J$ 
with the property that each index in $I\minus i$ also belongs to $J$.  
(Here we employ the following shorthand notation:  if $I=(i_1,\dots,i_m)$ 
and $i=i_j$ for some $1\leq j\leq m$, then $I\minus i$ denotes the multi-index 
$(i_1,\dots i_{j-1},i_{j+1},\dots,i_m)$.) We express this by writing $J\supset I\minus i$.
Next, whenever $i\in I\in \I$ we define 
\beq\label{I^I_i}
	\I^I_i:=\{J\in\I\,|\, J\supset I\minus i\}
\eeq
together with the corresponding set of unknowns
\beq\label{U^I_i}
	U^I_i:=\{u^J\,|\, J\in\I^I_i\}.
\eeq
With this notation, condition (i) requires that each $F^I_i$ appearing in the system
depends explicitly on (at most) the unknowns in $U^I_i$.
Thus (i) amounts to requiring that the equations for the unknown $u^I$ take the form
\beq\label{gen_form_3}
	u^I_{x_i}(x)=F^I_i(x;U^I_i(x))\qquad \text{whenever $i\in I\in \I$,}
\eeq
Condition (ii) then becomes the requirement that: whenever $I\in\I$, $i,j\in I$, and 
$i\neq j$, then
\begin{align}
	&F^I_{i,x_j}(x;U^I_i)+\sum_{J\in\I^I_i} F^I_{i,u^J}(x;U^I_i)F^J_j(x;U^J_j)\nn\\
	&= F^I_{j,x_i}(x;U^I_j)+\sum_{J\in\I^I_j} F^I_{j,u^J}(x;U^I_j)F^J_i(x;U^J_i)
	\label{int_cond2}
\end{align}
holds as an identity in $(x,U)$ near $(\bar x,\bar U)$.
Note that, in the sum on the lefthand side of \eq{int_cond2}, each function $F^J_j$ is 
given by the original system \eq{gen_form_3}. This is because $J\in\I$, so that 
according our notation, $u^J$ is an unknown in the system, and $J\supset I\minus i$ 
together with $j\neq i$ imply that $j\in J$; thus $u^J_{x_j}$ is prescribed
by the system, i.e.\ $F^J_j$ appears as one of the righthand sides in \eq{gen_form_3}. 
The same remarks apply to the functions $F^J_i$ in the sum on righthand side of 
\eq{int_cond2}. 
\begin{definition}\label{d_type}
	Let $\I$ be a set of strictly increasing multi-indices over $\{1,\dots,n\}$,
	and let $u^I$, $I\in\I$, be the corresponding unknowns. 
	Then, with the notation introduced above, a system of equations 
	of the form \eq{gen_form_3}, where the maps $F^I_i(x;U^I_i)$ are 
	defined on a neighborhood of a given point $(\bar x,\bar U)$,
	is called a {\em Darboux system} near $(\bar x,\bar U)$ 
	provided the integrability conditions \eq{int_cond2} is satisfied.
\end{definition}
We can now formulate Darboux's theorem as follows.
\begin{theorem}[Darboux's theorem]\label{d_thm}
	Assume that \eq{gen_form_3} is a Darboux system near $(\bar x,\bar U)$
	according to Definition \ref{d_type}, and let the data \eq{orig_gen_data} 
	be assigned on a neighborhood of $(\bar x,\bar U)$ for each unknown $u^I$. 
	Assume that all functions $F^I_i$ in \eq{gen_form_3} are $C^2$-smooth near 
	$(\bar x,\bar U)$, and that all functions $\bar u^I$ in \eq{orig_gen_data} 
	are $C^2$-smooth near $\bar x$. Then there is a neighborhood 
	of $\bar x$ on which the system \eq{gen_form_3} has a unique $C^2$-smooth
	solution taking on the assigned data \eq{orig_gen_data}.
\end{theorem}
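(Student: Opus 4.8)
The plan is to prove Theorem \ref{d_thm} by induction on the number $n$ of independent variables, following the strategy that Darboux used for $n=2,3$ and that the authors signal in Section \ref{comments}. The base case $n=1$ is essentially an ODE system (no integrability conditions are active since no unknown has two prescribed partials), covered by Theorem \ref{darb_1}. For the inductive step, I would single out the last coordinate $x_n$ and split the analysis according to whether $n\in I$ or $n\notin I$ for each multi-index $I\in\I$. The unknowns $u^I$ with $n\notin I$ have all their prescribed partials among $x_1,\dots,x_{n-1}$, so on each slice $\{x_n=\text{const}\}$ they satisfy a lower-dimensional system; the unknowns $u^I$ with $n\in I$ are additionally constrained in the $x_n$-direction by \eq{gen_form_3}.

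Concretely, first I would restrict to the hyperplane $\{x_n=\bar x_n\}$ and solve the ``reduced'' Darboux system in the $n-1$ variables $x_1,\dots,x_{n-1}$ there. This requires checking that freezing $x_n=\bar x_n$ yields a genuine Darboux system in $n-1$ variables: the equations are \eq{gen_form_3} for indices $i\neq n$, the integrability conditions \eq{int_cond2} with $i,j\neq n$ are inherited, and the data \eq{orig_gen_data} restrict appropriately. By the inductive hypothesis this produces a $C^2$ solution on a neighborhood of $\bar x\,'^{\,n}$ in the slice. Next, I would propagate this slice-solution off the hyperplane in the $x_n$-direction by solving, for each unknown $u^I$ with $n\in I$, the equation $u^I_{x_n}=F^I_n(x;U^I_n)$ as an evolution in $x_n$, using the just-constructed slice data as initial data at $x_n=\bar x_n$. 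This is exactly the determined-system situation of Theorem \ref{darb_1} (it is an ODE-with-parameters system in $x_n$, with parameters $x\,'^{\,n}$), and it yields candidate functions $U(x)$ on a full neighborhood of $\bar x$. For the unknowns with $n\notin I$, one would instead solve their $x_n$-evolution via the lower-dimensional system applied on each slice, or equivalently extend them using the equations that do involve $x_n$-compatible directions; care is needed here, and the cleanest route is to treat the whole extension in $x_n$ as one application of Darboux's first theorem to the combined system of all prescribed $x_n$-partials.

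The heart of the proof, and the step I expect to be the main obstacle, is verifying that the functions $U(x)$ produced by this two-stage construction actually satisfy \emph{all} the prescribed equations \eq{gen_form_3}, not merely the ones in the $x_n$-direction and the ones holding on the initial slice. In other words, I must show that each equation $u^I_{x_i}=F^I_i$ with $i\neq n$, which holds by construction at $x_n=\bar x_n$, continues to hold for $x_n\neq \bar x_n$. The natural approach is to introduce the ``defect'' quantities $D^I_i(x):=u^I_{x_i}(x)-F^I_i(x;U^I_i(x))$ for the non-evolutionary directions, show they vanish on the initial slice, and prove that they satisfy a \emph{linear, homogeneous} first-order system in $x_n$ whose right-hand side is Lipschitz in the $D$'s. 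Differentiating $D^I_i$ with respect to $x_n$ and using the equality of mixed partials together with the integrability identity \eq{int_cond2} (this is precisely where \eq{int_cond2} and the $C^2$ regularity of the $F$'s are essential, to legitimately commute $\partial_{x_n}$ and $\partial_{x_i}$) should produce $\partial_{x_n} D^I_i = \sum (\text{smooth})\cdot D^J_k$. By the uniqueness part of Theorem \ref{darb_1} applied to this linear homogeneous system with zero initial data, all defects vanish identically, so $U$ solves the full system.

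Finally, uniqueness of the $C^2$ solution follows from the uniqueness built into each stage: any solution must restrict on $\{x_n=\bar x_n\}$ to a solution of the reduced $(n-1)$-variable Darboux system, which is unique by the inductive hypothesis, and must then satisfy the $x_n$-evolution with that slice as data, which is unique by Theorem \ref{darb_1}. I would organize the bookkeeping by carefully tracking, via the sets $\I^I_i$ and $U^I_i$ of \eq{I^I_i}--\eq{U^I_i}, exactly which unknowns each equation depends on, so that the defect system genuinely closes up (only prescribed partials appear, thanks to condition (i)). The principal difficulties are thus (a) confirming the freeze-at-$x_n=\bar x_n$ system is again of Darboux type so the induction applies, and (b) the defect argument showing the non-$x_n$ equations propagate, which rests squarely on the integrability conditions \eq{int_cond2}.
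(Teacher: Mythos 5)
Your high-level strategy (induction on $n$, lower-dimensional auxiliary solves, a defect argument closed by the uniqueness part of Theorem \ref{darb_1}) is the same as the paper's, and your stage 1 is sound: freezing $x_n=\bar x_n$ does produce a Darboux system in $n-1$ variables that the inductive hypothesis solves. The proof breaks, however, at your propagation stage, and the break is structural rather than a matter of bookkeeping. The collection of all prescribed $x_n$-partials, $u^I_{x_n}=F^I_n(x;U^I_n)$ for $n\in I$, is \emph{not} a closed system: by \eq{I^I_i}--\eq{U^I_i} the argument list $U^I_n$ consists of all $u^K$ with $K\supset I\minus n$, and nothing forces $n\in K$. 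Hence $F^I_n$ may depend on unknowns whose $x_n$-derivative is prescribed nowhere; these unknowns are not evolved by your ``combined system of all prescribed $x_n$-partials'' (so Theorem \ref{darb_1}, which requires a determined, closed system in which every unknown appears differentiated, does not apply to it), and they are not known off the initial slice (so they cannot be absorbed as coefficients). Your fallback --- re-solving the $(n-1)$-dimensional system on each slice $\{x_n=c\}$ --- fails for the complementary reason: for $c\neq\bar x_n$ the unknowns $u^J$ with $n\in J$ have no admissible data on that slice, since their original data \eq{orig_gen_data} live in $\{x_J=\bar x_J\}\subset\{x_n=\bar x_n\}$, which is disjoint from the slice. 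The same structural fact undoes the defect argument as you state it: when $n\notin I$ there is no integrability condition \eq{int_cond2} pairing $i\in I$ with $n$ (those conditions require $i,j\in I$), and no equation for $u^I_{x_n}$ to substitute, so you cannot derive $\partial_{x_n}D^I_i=\sum(\cdots)\cdot D$ for those defects.

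The paper's proof is built precisely to avoid this point. Its final determined system (``system $n$'') takes, for each unknown $u^I$, the single equation in the direction $x_{\min I}$ --- which always exists --- rather than the $x_n$-direction; this is automatically a closed, determined system in all $n$ variables, and the defects $\Delta^I_i$ are then differentiated in the direction $x_{\min I}$, so the mixed-partials/integrability computation is always legitimate and the defect system is again of the type covered by Theorem \ref{darb_1}. The price is that data for system $n$ must be manufactured on the $n$ distinct hyperplanes $\{x_{\min I}=\bar x_{\min I}\}$; the paper does this by solving $n-1$ auxiliary Darboux systems in $n-1$ variables (restrictions of parts of the original system to $\{x_\ell=\bar x_\ell\}$, $\ell=1,\dots,n-1$), and it must prove these mutually consistent (Claim \ref{key_id_claim}), which invokes the theorem in $n-2$ variables. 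Correspondingly, the induction uses the two base cases $n=2,3$ and the hypothesis at both $n-1$ and $n-2$; your single-step induction from $n=1$ would not support that structure. To rescue a ``distinguished direction'' scheme like yours, you would have to explain how to extend the unknowns with $n\notin I$ off the slice before any equation for them is available --- and that is exactly the missing idea.
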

\begin{remark}\label{frob_rmk}
	Note that we allow for the possibility that the integrability conditions \eq{int_cond2}
	are vacuously met. However, in that case each unknown appears 	
	differentiated with respect to only one independent variable. Since we assume that the given 
	system is closed, this case is covered by Darboux's first theorem (Theorem \ref{darb_1} above).
	In what follows it therefore suffices to restrict attention to overdetermined Darboux 
	systems.
	
	We also remark that Darboux's theorem also covers the opposite extereme
	case where all first partials of all unknowns are prescribed, and the corresponding
	integrability conditions are met. This result is the standard Frobenius theorem for
	overdetermined systems, cf.\ Theorem 1, Chapter 6 in \cite{spi}.
\end{remark}

\section{Outline of proof for the case $n=3$}
\label{darb_n=3} 
To motivate the structure of the proof for general $n$, 
we provide an outline of Darboux's proof for the case $n=3$. 
Although our general proof, when specialized to $n=3$, will differ slightly from 
Darboux's proof, it will help to explain the structure of the proof for arbitrary $n$.
We recall that \cite{dar} establishes the existence part of Darboux's 
theorem for maximal systems when the number of independent 
variables is $n=2$. This provides the base-step for the inductive proof.

We consider a Darboux system in the three independent variables. For brevity
we do not apply the index notation introduced above and instead follow (partially)
the notation of Darboux \cite{dar}. The independent variables are
$X:=(x,y,z)$ and the independent variables are the seven (scalar, say) 
unknowns $U:=(u,v,w,p,q,r,s)$.
For concreteness we assume that the PDE system under consideration 
is maximal in the sense that for every choice of one, two, or three 
independent variables, there is an unknown which appears differentiated
with respect to exactly the chosen variables. Without loss of generality 
we assume the data are prescribed on hyperplanes through $\bar X=0$.
The system thus consists of 12 equations of the following form:
\begin{align}
	u_x &= F(X;U)\label{pde1}\\
	v_y &= \Phi(X;U)\label{pde2}\\
	w_z &= \Psi(X;U)\label{pde3}\\
	p_y &= f_1(X;U)\label{pde4}\\
	p_z &= f_2(X;U)\label{pde5}\\
	q_x &= \vp_0(X;U)\label{pde6}\\
	q_z &= \vp_2(X;U)\label{pde7}\\
	r_x &= \psi_0(X;U)\label{pde8}\\
	r_y &= \psi_1(X;U)\label{pde9}\\
	s_x &= \theta_0(X;U)\label{pde10}\\
	s_y &= \theta_1(X;U)\label{pde11}\\
	s_z &= \theta_2(X;U)\label{pde12},
\end{align}
with prescribed data of the form
\begin{align}
	u(0,y,z) &= \bar u(y,z)\label{data1}\\
	v(x,0,z) &= \bar v(x,z)\label{data2}\\
	w(x,y,0) &= \bar w(x,y)\label{data3}\\
	p(x,0,0) &= \bar p(x)\label{data4}\\
	q(0,y,0) &= \bar q(y)\label{data5}\\
	r(0,0,z) &= \bar r(z)\label{data6}\\
	s(0,0,0) &=\bar s.\label{data7}
\end{align}
The system is clearly overdetermined; to be a Darboux system the
two constraints (i) and (ii) above must be satisfied. It is straightforward to 
verify that the first constraint implies the following dependencies: 
\begin{align}
	&f_1 =f_1(X;w,p,q,s),\qquad f_2 =f_2(X;v,p,r,s),\\
	&\vp_0 =\vp_0(X;w,p,q,s),\qquad \vp_2 =\vp_2(X;u,q,r,s),\\
	&\psi_0 =\psi_0(X;v,p,r,s),\qquad \psi_1 =\psi_1(X;u,q,r,s),\\
	&\theta_0 =\theta_0(X;p,s),\qquad \theta_1 =\theta_1(X;q,s),\qquad
	\theta_2 =\theta_2(X;r,s).
\end{align}
The second constraint then requires that the following integrability 
conditions are satisfied as identities on a full 
$\RR^3_X\times\RR^7_U$-neighborhood of the point
$(0,\bar u(0),\bar v(0),\bar w(0),\bar p(0),\bar q(0),\bar r(0),\bar s)$:
\begin{align}
	&f_{1,z}+f_{1,w}\Psi+f_{1,p}f_2+f_{1,q}\vp_2+f_{1,s}\theta_2\nn\\
	&= f_{2,y}+f_{2,v}\Phi+f_{2,p}f_1+f_{2,r}\psi_1+f_{2,s}\theta_1,
	\label{reln_3}
\end{align}
\begin{align}
	&\vp_{0,z}+\vp_{0,w}\Psi+\vp_{0,p}f_2+\vp_{0,q}\vp_2+\vp_{0,s}\theta_2\nn\\
	&= \vp_{2,x}+\vp_{2,u}F+\vp_{2,q}\vp_0+\vp_{2,r}\psi_0+\vp_{2,s}\theta_0,
	\label{reln_6}
\end{align}
\begin{align}
	&\psi_{0,y}+\psi_{0,v}\Phi+\psi_{0,p}f_1+\psi_{0,r}\psi_1+\psi_{0,s}\theta_1\nn\\
	&= \psi_{1,x}+\psi_{1,u}F+\psi_{1,q}\vp_0+\psi_{1,r}\psi_0+\psi_{1,s}\theta_0,
	\label{reln_9}
\end{align}
\begin{align}
	&\theta_{0,y}+\theta_{0,p}f_1+\theta_{0,s}\theta_1
	= \theta_{1,x}+\theta_{1,q}\vp_0+\theta_{1,s}\theta_0\label{reln_13}\\
	&\theta_{0,z}+\theta_{0,p}f_2+\theta_{0,s}\theta_2
	= \theta_{2,x}+\theta_{2,r}\psi_0+\theta_{2,s}\theta_0\label{reln_14}\\
	&\theta_{1,z}+\theta_{1,q}\vp_2+\theta_{1,s}\theta_2
	= \theta_{2,y}+\theta_{2,r}\psi_1+\theta_{2,s}\theta_1.\label{reln_15}
\end{align}
%
The overall approach for establishing existence of a solution taking on 
the assigned data is to build the solution by solving several smaller, auxiliary 
systems. Each auxiliary system is either a Darboux system in two independent 
variables (and thus solvable according to the base-step $n=2$), or a determined 
system (solvable according to Darboux's first theorem). All equations 
occurring in these smaller systems will be copies of equations form the
original system \eq{pde1}-\eq{pde12}.

For the case $n=3$ there will be three auxiliary systems, which we refer to as 
system 1 through 3, respectively. System 3 will be the last system to be 
solved\footnote{This is not precisely what Darboux does in \cite{dar}; his argument 
appears to proceed in the opposite order from what we do.},
and this will be a determined system which is solved via an application of Darboux's 
first theorem. What we want to arrange is that the resulting solution of system 3 
is in fact the sought-for solution of the original system \eq{pde1}-\eq{pde12} with 
the original data \eq{data1}-\eq{data7}. The main issue is how to provide data
for system 3 so that this works out, and for this we make use of the solutions 
to systems 1 and 2. 

System 3 consists of copies of the $u_x$, $v_y$, $w_z$, $p_y$, $q_x$, $r_x$, 
and $s_x$ equations from the original, given system \eq{pde1}-\eq{pde12}. 
(In the general case, system $n$ will consist of copies of all equations 
$u^I_{x_i}=F^I_i(x;U^I_i)$ from \eq{gen_form_3} with $I\in\I$ and $i=\min I$.)
Note that these equations form a determined system. For clarity we denote the 
unknowns of system 3 by $\hat U=(\hat u,\hat v,\hat w,\hat p,\hat q,\hat r,\hat s)$. 
The challenge is to assign appropriate $\hat U$-data so that the resulting solution
solves \eq{pde1}-\eq{pde12} with data \eq{data1}-\eq{data7}. 
As system 3 is solved using Darboux's first theorem, the relevant $\hat U$-data must 
prescribe
\beq\label{remaining_data}
	\!\!\hat u(0,y,z),\, \hat v(x,0,z),\, \hat w(x,y,0),\, \hat p(x,0,z),\, \hat q(0,y,z),\, 
	\hat r(0,y,z),\, \hat s(0,y,z).
\eeq
For the three first we simply use the original data in \eq{data1}-\eq{data3}.
However, the remaining four pieces of data need to be generated, and for this
we proceed to identify and solve systems 1 and 2.

For system 1, we make copies of the $q_z$, $r_y$, $s_y$, and $s_z$ 
equations from the original system and restrict to the hyperplane $\{x=0\}$.
(In the general case, system 1 will consist of copies of all equations
$u^I_{x_i}=F^I_i(x;U^I_i)$ from \eq{gen_form_3} where $I\in\I$, $1\in I\neq (1)$ and $1<i\in I$,
which are then restricted to $\{x_1=0\}$.) For clarity we introduce new labels 
$\ul q$, $\ul r$, $\ul s$ for the resulting unknowns, which are functions of $(y,z)$.
System 1 thus takes the form
\begin{align}
	\ul q_z &= \vp_2(0,y,z;\bar u(y,z), (\ul q,\ul r,\ul s)(y,z))\label{1st_ss_1}\\
	\ul r_y &= \psi_1(0,y,z;\bar u(y,z), (\ul q,\ul r,\ul s)(y,z))\label{1st_ss_2}\\
	\ul s_y &= \theta_1(0,y,z;\bar u(y,z), (\ul q,\ul s)(y,z))\label{1st_ss_3}\\
	\ul s_z &= \theta_2(0,y,z;\bar u(y,z), (\ul r,\ul s)(y,z)).\label{1st_ss_4}
\end{align}
Observe that this is a closed system: each unknown 
appearing on one of the righthand sides in \eq{1st_ss_1}-\eq{1st_ss_4}
also appears at least once in one of the lefthand sides.
For this system we use the original data to assign the data
\[\ul q(y,0)=\bar q(y),\qquad \ul r(0,z)=\bar r(z),\qquad \ul s(0,0)=\bar s.\]
As the equations \eq{1st_ss_1}-\eq{1st_ss_4} are obtained by duplication
and restriction to $\{x=0\}$ of equations from the original system, it turns out
that system 1 is again a Darboux system, but now in the two independent 
variables $(y,z)$. According to the base-step of the proof it has a local solution 
$(\ul q,\ul r,\ul s)(y,z)$ defined for $(y,z)\approx (0,0)$ and taking on the assigned data.

At this point we can assign three more of the data in \eq{remaining_data} by setting 
\[\hat q(0,y,z)=\ul q(y,z),\quad \hat r(0,y,z)=\ul r(y,z),\quad\text{and}\quad
\hat s(0,y,z)=\ul s(y,z).\]
However, it still remains to assign appropriate data for $p(x,0,z)$. To do so 
we exploit the $p_z$ equation from the original system, which we have 
not yet used. Proceeding as for system 1 we make a copy of this equation,
and then restrict it to the hyperplane $\{y=0\}$. However, differently from what
occurred for system 1, we do not obtain a closed system in this way: we get a
single equation for $p(x,0,z)$ which contains both $r(x,0,z)$ and $s(x,0,z)$
on its righthand side. To obtain a closed system we add copies of the  
$r_x$ and $s_x$ equations (note that these also appear in system 3), 
and restrict them to $\{y=0\}$. We denote the resulting unknowns by $\check{p}$,
$\check{r}$, $\check{s}$; these are functions of $(x,z)$ and are required 
to satisfy the following system 2:
\begin{align}
	\check{p}_z &= f_2(x,0,z;\bar v(x,z), (\check{p},\check{r},\check{s})(x,z))\label{2nd_ss_1}\\
	\check{r}_x &= \psi_0(x,0,z;\bar v(x,z), (\check{p},\check{r},\check{s})(x,z))\label{2nd_ss_2}\\
	\check{s}_x &= \theta_0(x,0,z;(\check{p},\check{s})(x,z)).\label{2nd_ss_3}
\end{align}
(This system, which is also considered in Darboux's argument, is not exactly 
what our general approach below uses as system 2; see Remark \ref{Darbouxs_version}.)
The equations \eq{2nd_ss_1}-\eq{2nd_ss_3} form a determined system 
and to provide data we use both the original data as well as the solution to system 1:
\[\check{p}(x,0)=\bar p(x),\qquad \check{r} (0,z)=\bar r(z),\qquad \check{s} (0,z)=\ul s(0,z).\]
According to Darboux's first theorem it possesses a local solution 
$(\check{p},\check{r},\check{s})(x,z)$ near $(\bar x,\bar z)=(0,0)$.

We can now assign appropriate data for system 3:
\[\hat u(0,y,z)=\bar u(y,z),\quad \hat v(x,0,z)=\bar v(x,z),\quad \hat w(x,y,0)=\bar w(x,y),\]
\[\hat p(x,0,z)=\check p(x,z),\quad \hat q(0,y,z)=\ul q(y,z),\quad \hat r(0,y,z)=\ul r(y,z),\]
and
\[\hat s(0,y,z)=\ul s(y,z).\]
Note that this makes use of the original data, as well as the solutions of both 
system 1 and system 2.
Darboux's first theorem guarantees the existence of a solution $\hat U(x,y,z)$
of system 3 on a full neighborhood of $0\in\RR^3$.

The claim now is that this $\hat U$ is in fact a solution 
of the original system \eq{pde1}-\eq{pde12} and takes on the original data 
\eq{data1}-\eq{data7}. Thanks to the construction of the solutions to system 1 and 2, 
it is straightforward to verify that $\hat U$ attains the data \eq{data1}-\eq{data7}.
Next, as system 3 is a subsystem of the original system, all that remains to 
be verified is that $\hat U$ solves the remaining $p_z$, $q_z$, $r_y$, $s_y$, 
and $s_z$ equations in \eq{pde1}-\eq{pde12}. For this we 
follow Darboux and define the quantities
\begin{align}
	A(X)&:=\hat p_z(X)-f_2(X;(\hat v,\hat p,\hat r,\hat s)(X))\\
	B(X)&:=\hat q_z(X)-\vp_2(X;(\hat u,\hat q,\hat r,\hat s)(X))\\
	C(X)&:=\hat r_y(X)-\psi_1(X;(\hat u,\hat q,\hat r,\hat s)(X))\\
	D(X)&:=\hat s_y(X)-\theta_1(X;(\hat q,\hat s)(X))\\
	E(X)&:=\hat s_z(X)-\theta_2(X;(\hat r,\hat s)(X)).
\end{align}
Without going into the details (carried out for the general case below),
it turns out that these quantities solve a linear, homogeneous, and determined 
system with vanishing 
data near $0\in\RR^3$. (Here the integrability conditions
\eq{reln_3}-\eq{reln_15} are used.) 
The uniqueness part of Darboux's first theorem then 
implies that they must vanish identically near $0\in\RR^3$. 
This means precisely that $\hat U$ satisfies also the $p_z$, $q_z$, $r_y$, $s_y$, 
and $s_z$ equations in the original system, on a full neighborhood of $0\in\RR^3$.
This establishes the existence part of Darboux's theorem in the case $n=3$.

\begin{remark}\label{Darbouxs_version}
	The outline above essentially follows Darboux's original proof for the case
	$n=3$. As already noted, our argument for the general case with any 
	number $n$ of independent variables will differ slightly from that of Darboux 
	when specialized to $n=3$. Specifically, in our setup for $n=3$, system 2 
	will be an overdetermined Darboux system, rather than a determined system.
\end{remark}

\section{Proof of Darboux's theorem}\label{gen_case_proof}
We now return to the general case and consider a given Darboux system 
\eq{gen_form_3} in $n$ independent variables and with data \eq{orig_gen_data}.
To simplify the notation, and without loss of generality, we assume 
from now on that $\bar x=0$. Thus the assigned data are:
\beq\label{gen_data}
	u^I(x)|_{\{x_I=0\}}=\bar u^I(x_{I^c}),
\eeq
Our proof proceeds by induction on the number $n$ of independent variables.
As noted above, \cite{dar} provides a proof of the existence part of Darboux's 
theorem when the number of independent variables is $2$ and $3$. This takes
care of the two base-steps for the inductive proof of the general case. We now 
assume $n> 3$, and the induction hypothesis is that Theorem 
\ref{d_thm} holds for the cases with $n-2$ and $n-1$ independent variables.
Before providing a detailed proof, we provide an outline of the argument.

\subsection{Outline of proof for general $n$}
As in the case $n=3$, we begin by defining $n$ auxiliary systems which 
are referred to as system 1 through $n$. System $n$, whose solution 
will turn out to be the sought-for solution, consists of a copy of each 
equation $u^I_{x_i}=F^I_i(x;U^I_i)$ from \eq{gen_form_3} with $I\in\I$ and $i=\min I$.
This is a determined system, and the main part of the proof is about how to 
generate appropriate data for this system. As above this is accomplished
by solving the auxiliary systems 1 through $n-1$, 
and then using their solutions to provide data for system $n$. 

Once such data are available, Darboux's first theorem guarantees the
existence of a solution $\hat U$ of system $n$. The fact that $\hat U$ 
satisfies the original data \eq{gen_data} will be a direct consequence 
of the construction. On the other hand, to show that $\hat U$ also satisfies the 
remaining equations in the original system \eq{gen_form_3} (i.e.\ 
the equations not in system $n$), requires an argument. For this we 
follow Darbouxs approach in the case $n=3$ and show that the functions 
$\hat u^I_{x_i}-F^I_i(x;\hat U^I_i)$ where $I\in\I$, $|I|>1$, and $i\in I$ is 
such that $i\neq \min I$, solve a linear, homogeneous, and determined 
system with vanishing data. For this, the integrability 
conditions \eq{int_cond2} are utilized. The uniqueness part of 
Darboux's first theorem then implies that these quantities vanish 
identically near $\bar x=0$, completing the existence part of 
Darboux's theorem for the case of $n$ independent variables.

The remaining issue is to define and solve system 1 through $n-1$. 
For each $\ell=1,\dots,n-1$, we copy  
certain equations from the original system (detailes below), 
and restrict them to the hyperplane $\{x_\ell=0\}$. The selection is 
made so as to guarantee that the chosen set of equations 
form a Darboux system in $n-1$ independent variables. We refer to
the resulting system as ``system $\ell$.'' The data for system $\ell$ 
are provided by the original data \eq{gen_data}. (This is a 
technical point where our argument differs from that of Darboux \cite{dar},
cf.\ Remark \ref{Darbouxs_version}.) 
According to the induction hypothesis, it has a local solution near 
$0\in\RR^{n-1}$. 

Having solved system 1 through $(n-1)$, we use their solutions to 
provide the appropriate data for system $n$, and the argument for
the existence part of Darboux's theorem in $n$ independent variables 
is concluded as indicated above.

\subsection{System $\ell$ for $\ell=1,\dots,n-1$} 
To define these systems we introduce the following notation:
for $\ell=1,\dots,n-1$, we let
\beq\label{I_ell}
	\I_{\ell}:=\{I\in\I\,|\, \ell\in I\neq(\ell)\},
\eeq
and denote the unknowns of system $\ell$ by $u^{\ell,I}$ for $I\in \I_{\ell}$. 
These will be functions of $x\,'^{\, \ell}= (x_1,\dots,x_{\ell-1},x_{\ell+1},\dots,x_n)$. 
The equations of system $\ell$ are then obtained as follows: for each $I\in\I_\ell$
and for each $i\in I\minus\ell$ we make a copy of the equation occurring in 
equation \eq{gen_form_3}, restrict it to the hyperplane $\{x_\ell=0\}$, and 
rename any unknown $u^K$ appearing in it as $u^{\ell,K}$.

At this point we need to address a notational issue. 
Namely, assume $i\in I\in \I_\ell$ and $i\neq \ell$. 
According to the procedure above, system $\ell$ will then 
contain a copy of the equation 
\[u^I_{x_i}=F^I_i(x;U^I_i)\]
from the original system \eq{gen_form_3}; the corresponding equation 
in system $\ell$ is obtained by restricting this equation to the hyperplane 
$\{x_\ell=0\}$.
Now, in the particular case that $I$ is the 2-index $(\ell,i)$ or $(i,\ell)$, then 
$I\minus i=(\ell)$ and, according to \eq{I^I_i}-\eq{U^I_i},
$F^I_i$ may depend explicitly on the unknown $u^{(\ell)}$. In the corresponding equation 
in system $\ell$, any such occurrence of $u^{(\ell)}$ is to be replaced 
by the originally given data $\bar u^{(\ell)}(x\,'^{\, \ell})$ (because we are 
restricting to the hyperplane $\{x_\ell=0\}$). It is awkward to treat the 2-indices 
$(\ell,i)$ and $(i,\ell)$ separately, and we shall simply write the corresponding equations 
in system $\ell$ as
\beq\label{syst_ell}
	u^{\ell,I}_{x_i}(x\,'^{\, \ell})
	=F^I_i(x\,'^{\, \ell}_0;[\bar u^{(\ell)}(x\,'^{\, \ell})],U^{\ell,I}_i(x\,'^{\, \ell})),
\eeq
where we have used the notation in \eq{notn_2} for $x\,'^{\, \ell}_0$, and
the square brackets indicate that the bracketed term is present only when 
$I=(\ell,i)$ or $I=(i,\ell)$. Finally, $U^{\ell,I}_i$ denotes the collection 
of the remaining dependent-variable arguments of $F^I_i$, i.e.,
whenever $i\in I\in\I_\ell$, we set
\beq\label{I^ell^I_i}
	\I^{\ell,I}_i:=\{K\in\I\,|\,  K\supset I\minus i,\, K\neq (\ell)\}
\eeq 
and
\beq\label{U^ell^I_i}	
	U^{\ell,I}_i:=\{u^{\ell,K}\,|\, K\in\I^{\ell,I}_i\}.
\eeq
Thus, system $\ell$ consists of the following 
equations: for each $I\in \I_{\ell}$ we have the $|I|-1$ equations
in \eq{syst_ell} with $i\in I\minus \ell$.
As data for $u^{\ell,I}$ we make use of the original data \eq{gen_data} and require 
\beq\label{ell_data}
	u^{\ell,I}(x\,'^{\, \ell})\big|_{\{x_{I'^{\ell}}=0\}}=\bar u^I(x_{I^c}),
\eeq
where $I'^{\ell}$ denotes the multi-index $I\minus \ell$. We note that the 
right hand side in \eq{ell_data} is independent of $\ell$.

For later reference we record the following: whenever $\ell\in I$ and $|I|\geq 3$, then 
\beq\label{3_or_more}
	\left\{\begin{array}{l}
		\text{the square-bracketed term in \eq{syst_ell} is absent,}\\
		\I^{\ell,I}_i=\I^{I}_i\quad\text{by \eq{I^I_i}, and}\\
		u^{\ell,K}\in U^{\ell,I}_i\quad \text{if and only if $u^K\in U^I_i$.}
	\end{array} \right\}
\eeq
We now have that:
\begin{claim}\label{claim:syst_ell}
	For each $\ell=1,\dots,n-1$, system $\ell$ is a closed Darboux system 
	in $n-1$ independent variables.
\end{claim}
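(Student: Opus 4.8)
The plan is to verify, one at a time, the three properties packaged into the phrase ``closed Darboux system in $n-1$ independent variables.'' First, that system $\ell$ genuinely lives in the $n-1$ variables $x\,'^{\,\ell}$: this is immediate from the construction \eqref{syst_ell}, since each unknown $u^{\ell,I}$ ($I\in\I_\ell$) is a function of $x\,'^{\,\ell}$ and the partials prescribed for it are exactly $u^{\ell,I}_{x_i}$ with $i\in I\minus\ell$. Because $I\mapsto I\minus\ell$ is injective on $\I_\ell$ (any $I\in\I_\ell$ contains $\ell$), the multi-index set indexing system $\ell$ is $\{I\minus\ell : I\in\I_\ell\}$, a set of strictly increasing multi-indices over $\{1,\dots,n\}\minus\{\ell\}$, as required.

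Next I would check closedness. Every $I\in\I_\ell$ has $|I|\geq 2$, so $I\minus\ell\neq\emptyset$ and $u^{\ell,I}$ is differentiated at least once. It remains to see that each unknown occurring on a right-hand side of \eqref{syst_ell} is itself an unknown of system $\ell$: by \eqref{I^ell^I_i} such an unknown carries a multi-index $K$ with $K\supset I\minus i$ and $K\neq(\ell)$, and since $i\neq\ell$ gives $\ell\in I\minus i$, we get $\ell\in K$, hence $K\in\I_\ell$. The only non-unknown appearing is the bracketed datum $\bar u^{(\ell)}$, a known function of $x\,'^{\,\ell}$, which does not affect closedness.

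The substantive step is the integrability condition \eqref{int_cond2} for system $\ell$. The crucial observation is that such a condition is indexed by a triple $(I,i,j)$ with $I\in\I_\ell$ and \emph{distinct} $i,j\in I\minus\ell$, which forces $|I\minus\ell|\geq 2$, i.e.\ $|I|\geq 3$. For every such $I$ the block \eqref{3_or_more} applies: the bracketed term is absent from \eqref{syst_ell}, $\I^{\ell,I}_i=\I^I_i$, and $U^{\ell,I}_i$ matches $U^I_i$ under the renaming $u^K\mapsto u^{\ell,K}$. I would then take the \emph{original} identity \eqref{int_cond2} for this same triple $(I,i,j)$ — available since $\ell\in I$ makes $i,j\in I$ with $i\neq j$ an admissible original triple — and restrict it to $\{x_\ell=0\}$. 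The claim is that this restricted identity, read in the variables $(x\,'^{\,\ell},\{u^K:K\in\I_\ell\})$, is exactly the integrability condition for system $\ell$.

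To establish that claim I would track, term by term, which unknowns survive the restriction. In the $F^I_i$- and $F^I_j$-terms the arguments lie in $U^I_i$ and $U^I_j$, whose multi-indices all belong to $\I_\ell$; under the renaming these become the corresponding system-$\ell$ quantities. For the nested right-hand sides $F^J_j$ (with $J\in\I^I_i$) the inner arguments carry multi-indices $J'\supset J\minus j\ni\ell$, so either $J'\in\I_\ell$ or $J'=(\ell)$; the latter forces $J=(\ell,j)$, which can occur only when $I=(\ell,i,j)$, and the attendant occurrence of $u^{(\ell)}$ is precisely the one replaced by $\bar u^{(\ell)}(x\,'^{\,\ell})$ in the system-$\ell$ equation for the $2$-index $(\ell,j)$. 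Since \eqref{int_cond2} holds as an identity in $(x,U)$ near $(\bar x,\bar U)$, setting $x_\ell=0$ and substituting $u^{(\ell)}=\bar u^{(\ell)}$ yields the system-$\ell$ identity after renaming. I expect the bookkeeping of exactly this collapse — distinguishing the unknowns of $\I_\ell$ that survive from the single anomalous $u^{(\ell)}$ that becomes data through the index $J=(\ell,j)$ — to be the main obstacle; once \eqref{3_or_more} restricts attention to $|I|\geq 3$ and that lone anomaly is pinned down, the matching of terms is routine.
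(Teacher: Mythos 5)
Your proposal is correct and follows essentially the same route as the paper's proof: closedness via the observation that any right-hand-side index $K$ satisfies $\ell\in K\neq(\ell)$, hence $K\in\I_\ell$, and integrability by noting that distinct $i,j\in I\minus\ell$ force $|I|\geq 3$ so that \eqref{3_or_more} applies, then restricting the original identity \eqref{int_cond2} to $\{x_\ell=0\}$ and renaming $u^K\mapsto u^{\ell,K}$. Your explicit tracking of the lone anomalous inner index $J=(\ell,j)$ (possible only when $I$ is a 3-index), where $u^{(\ell)}$ survives as a free argument and must be replaced by the datum $\bar u^{(\ell)}(x\,'^{\,\ell})$, is in fact a point of care that the paper's passage from \eqref{syst1_int_condb} to \eqref{syst1_int_conda} glosses over.
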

\begin{proof}
	To show that system $\ell$ is closed we need to argue that whenever 
	an unknown $u^{\ell,K}$ appears on the righthand side of one of the 
	equations in \eq{syst_ell}, then system $\ell$ also contains an equation 
	where $u^{\ell,K}$ appears on the lefthand side. 
	If $u^{\ell,K}$ appears on the righthand side of the equation \eq{syst_ell}, 
	then $K\supset I\minus i$, and $K\neq (\ell)$. As $\ell\in I$ and $\ell\neq i$, 
	we have that $\ell\in K\neq (\ell)$. It follows that $K\in\I_\ell$, and that for each 
	$k\in K\minus \ell$ the equation 
	\[u^{\ell,K}_{x_k}=F^K_k(x\,'^{\, \ell}_0;[\bar u^{(\ell)}(x\,'^{\, \ell})],U^{\ell,K}_k(x\,'^{\, \ell}))\] 
	is prescribed by system $\ell$.
	
	Next, we need to verify the integrability conditions corresponding to 
	equality of 2nd mixed partial derivatives for system $\ell$. I.e., we must argue that the 
	two properties corresponding to (i) and (ii) in Section \ref{gen_int_conds} hold in 
	the present situation. 
	For this, assume that system $\ell$ prescribes both $u^{\ell,I}_{x_i}$ and $u^{\ell,I}_{x_j}$,
	where $i\neq j$.
	In particular, this means that $i$, $j$, and $\ell$ all belong to $I$, and that they are 
	all distinct. It follows that $|I|\geq 3$, such that \eq{3_or_more} applies and the 
	square-bracketed argument on the righthand side of \eq{syst_ell} is absent. 
	Thus, the equations in question read:
	\beq\label{1eqni}
		u^{\ell,I}_{x_i}=F^I_i(x\,'^{\, \ell}_0;U^{\ell,I}_i(x\,'^{\, \ell})),
	\eeq
	and
	\beq\label{1eqnj}
		u^{\ell,I}_{x_j}=F^I_j(x\,'^{\, \ell}_0;U^{\ell,I}_j(x\,'^{\, \ell})),
	\eeq
	In particular, as both $i$ and $j$ are different from $\ell$, both $x_i$ and $x_j$ 
	appear as independent variables in \eq{1eqni} and \eq{1eqnj}. Applying $\del_{x_j}$ 
	to \eq{1eqni}, $\del_{x_i}$ to \eq{1eqnj}, and equating the results we obtain (dropping 
	arguments for now)
	\begin{align}
		F^I_{i,x_j}+\sum_{J\in \I^{\ell,I}_i}F^I_{i,u^J}u^{\ell,J}_{x_j}
		= F^I_{j,x_i}+\sum_{J\in \I^{\ell,I}_j}F^I_{j,u^J}u^{\ell,J}_{x_i}.\label{int_intermed}
	\end{align}
	The first issue is to argue that all the partials derivatives $u^{\ell,J}_{x_j}$ and 
	$u^{\ell,J}_{x_i}$ in the two sums in \eq{int_intermed} are all prescribed by system $\ell$. 
	Consider $u^{\ell,J}_{x_j}$ in the lefthand sum. This partial derivative is prescribed by 
	system $\ell$ provided $j\in I\minus \ell$ and $J\in \I_\ell$. The first condition is satisfied 
	since $j$ and $\ell$ are distinct and both belong to $I$. The second condition is met 
	because $J\in \I^{\ell,I}_i$, so that $J\supset I\minus i\ni j,\, \ell$; in particular, 
	$\ell\in J\neq (\ell)$, i.e.\ $J\in \I_\ell$. A similar argument shows that each partial 
	$u^{\ell,J}_{x_i}$ in the righthand sum of \eq{int_intermed} is prescribed by system $\ell$.
	
	Again, as $i$, $j$, $\ell$ are distinct and belong to $I$, we have that \eq{3_or_more} applies.
	Therefore, upon substituting from system $\ell$, we obtain from \eq{int_intermed}
	the requirement that
	\begin{align}
		&\quad F^I_{i,x_j}(x\,'^{\, \ell}_0;U^{\ell,I}_i)
		+\sum_{J\in \I^I_i}F^I_{i,u^{\ell,J}}(x\,'^{\, \ell}_0;U^{\ell,I}_i)
		F^J_j(x\,'^{\, \ell}_0;U^{\ell,J}_j)\nn\\
		&= F^I_{j,x_i}(x\,'^{\, \ell}_0;U^{\ell,I}_j)
		+\sum_{J\in \I^I_j}F^I_{j,u^{\ell,J}}(x\,'^{\, \ell}_0;U^{\ell,I}_j)
		F^J_i(x\,'^{\, \ell}_0;U^{\ell,J}_i)\label{syst1_int_conda}
	\end{align}
	should hold as an identity with respect to $x\,'^{\, \ell}$ and the dependent variables occurring.
	We finish the proof by arguing that this is a consequence of the integrability condition 
	\eq{int_cond2}. Indeed, since \eq{int_cond2} by assumption holds as an identity 
	in $(x,U)$, we may restrict \eq{int_cond2} 	to the hyperplane $\{x_\ell=0\}$.
	The result is that the following identity holds
	\begin{align}
		& F^I_{i,x_j}(x\,'^{\, \ell}_0; U^I_i)
		+\sum_{J\in\I^I_i} F^I_{i,u^J}(x\,'^{\, \ell}_0; U^I_i)
		F^J_j(x\,'^{\, \ell}_0; U^J_j)\nn\\
		&= F^I_{j,x_i}(x\,'^{\, \ell}_0;U^I_j) 
		+\sum_{J\in\I^I_j} F^I_{j,u^J}(x\,'^{\, \ell}_0;U^I_j)
		F^J_i(x\,'^{\, \ell}_0;U^J_i).\label{syst1_int_condb}
	\end{align}
	Finally, it follows from \eq{3_or_more} that \eq{syst1_int_conda} results from \eq{syst1_int_condb} upon 
	renaming any dependent variable $u^K$ in \eq{syst1_int_condb} by $u^{1,K}$.
\end{proof}
From the inductive hypothesis we conclude that, for each $\ell=1,\dots,n-1$, system $\ell$
with data \eq{ell_data} possess a unique solution $u^{\ell,I}(x\,'^{\, \ell})$ defined on a full $(n-1)$-dimensional 
neighborhood of $0\in\RR^{n-1}_{x\,'^{\, \ell}}$.

Before we continue we make the following observation that will be important later.
It is a consequence of the fact that the data assigned for system $\ell$ are independent 
of $\ell$, cf.\ \eq{ell_data}.

\begin{claim}\label{key_id_claim}
	Assume that $I\in\I$ contains two distinct indices $k$ and $\ell$.
	Then the solutions $u^{k,I}$ and $u^{\ell,I}$ of systems $k$ and $\ell$,
	respectively, satisfy
	\beq\label{key_id}
		u^{k,I}(x\,'^{\, k})|_{\{x_\ell=0\}}=u^{\ell,I}(x\,'^{\, \ell})|_{\{x_k=0\}}.
	\eeq
\end{claim}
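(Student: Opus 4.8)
The plan is to exhibit both sides of \eq{key_id}, along with the corresponding restrictions of all unknowns $u^{\bullet,K}$ with $\{k,\ell\}\subseteq K$, as solutions of one and the same Darboux system in the $n-2$ independent variables obtained by deleting $x_k$ and $x_\ell$, and then to conclude by the uniqueness furnished by the induction hypothesis. Fix distinct $k,\ell$ and, for every $K\in\I$ with $\{k,\ell\}\subseteq K$, set
\[
	v^K:=u^{k,K}(x\,'^{\, k})\big|_{\{x_\ell=0\}},\qquad
	w^K:=u^{\ell,K}(x\,'^{\, \ell})\big|_{\{x_k=0\}},
\]
each viewed as a function of the variables with $x_k$ and $x_\ell$ omitted. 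Proving $v^K=w^K$ for all such $K$ yields \eq{key_id} upon taking $K=I$.

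Next I would read off the equations satisfied by $(v^K)$. Whenever $|K|\geq 3$ the square-bracketed datum in \eq{syst_ell} is absent and $\I^{k,K}_i=\I^K_i$ by \eq{3_or_more}; moreover, for $i\in K\minus\{k,\ell\}$ the operator $\del_{x_i}$ commutes with restriction to $\{x_\ell=0\}$. Restricting the equation $u^{k,K}_{x_i}=F^K_i(x\,'^{\, k}_0;U^{k,K}_i)$ therefore gives
\[
	v^K_{x_i}=F^K_i\big(x\,'^{\, k}_0\big|_{\{x_\ell=0\}};\,\{v^J:J\in\I^K_i\}\big),
	\qquad i\in K\minus\{k,\ell\},
\]
where I use that each $J\in\I^K_i$ satisfies $J\supset K\minus i\supseteq\{k,\ell\}$, so the restricted arguments again belong to the family $(v^J)$. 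The sole $2$-index with $\{k,\ell\}\subseteq J$ is $J=\{k,\ell\}$ itself; it occurs only for $K=\{k,\ell,i\}$, and there $v^{\{k,\ell\}}=u^{k,\{k,\ell\}}|_{\{x_\ell=0\}}$ equals the prescribed datum $\bar u^{\{k,\ell\}}$ by \eq{ell_data}, entering the system as a known inhomogeneity exactly as the bracketed term does in \eq{syst_ell}. The identical computation with $k$ and $\ell$ interchanged shows that $(w^K)$ satisfies the very same equations: both $x\,'^{\, k}_0|_{\{x_\ell=0\}}$ and $x\,'^{\, \ell}_0|_{\{x_k=0\}}$ set the $x_k$- and $x_\ell$-slots to $0$, and the inhomogeneity $\bar u^{\{k,\ell\}}$ is common to the two systems.

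It remains to match the data and to certify uniqueness. Restricting the data \eq{ell_data} for system $k$ first to $\{x_\ell=0\}$ and then to $\{x_{K\minus\{k,\ell\}}=0\}$ amounts to imposing $x_{K\minus k}=0$, whence $v^K|_{\{x_{K\minus\{k,\ell\}}=0\}}=\bar u^K(x_{K^c})$; the same reduction for $w^K$ produces the identical right-hand side, since the data in \eq{ell_data} do not depend on the chosen index. Arguing exactly as in the proof of Claim \ref{claim:syst_ell}, but now restricting the identity \eq{int_cond2} to $\{x_k=x_\ell=0\}$, one checks that this common reduced system is a closed Darboux system in $n-2$ independent variables; the induction hypothesis (Theorem \ref{d_thm} for $n-2$ variables) then gives it a unique $C^2$ solution for the assigned data, forcing $v^K=w^K$ for all admissible $K$ and hence \eq{key_id}. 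The step requiring the most care is the bookkeeping for the degenerate $2$-index $\{k,\ell\}$: one must verify that its two restrictions reduce to the same given datum, so that closedness is preserved and the two reduced systems genuinely coincide.
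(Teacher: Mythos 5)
Your proposal is correct and follows essentially the same route as the paper's own proof: both restrict the relevant unknowns of systems $k$ and $\ell$ to $\{x_\ell=0\}$ and $\{x_k=0\}$ respectively, observe that the resulting families satisfy one and the same closed Darboux system in $n-2$ variables with the same data (with the $2$-index $(k,\ell)$ reducing via \eq{ell_data} to the common known datum $\bar u^{(k,\ell)}$, exactly the bracketed-term bookkeeping you flag), and conclude by the uniqueness part of the induction hypothesis. The paper likewise omits the detailed verification that the reduced system is Darboux, citing the argument of Claim \ref{claim:syst_ell}.
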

\begin{proof}
	Assume $k<\ell$. If $I$ is the 2-index $(k,\ell)$, then $I'^k=(\ell)$, $I'^\ell=(k)$,
	and the data assignment \eq{ell_data} gives 
	\begin{align*}
		u^{k,I}(x\,'^{\, k})|_{\{x_\ell=0\}} 
		&= u^{k,I}(x\,'^{\, k})|_{\{x_{I'^k}=0\}}\\
		&= \bar u^{I}(x_{I^c})\\
		&= u^{\ell,I}(x\,'^{\, \ell})|_{\{x_{I'^\ell}=0\}}= u^{\ell,I}(x\,'^{\, \ell})|_{\{x_k=0\}}, 
	\end{align*}
	which verifies \eq{key_id} in this case. Next assume $|I|\geq 3$, $k<\ell<n$, 
	and set
	\[\I_{k,\ell}:=\{I\in\I\,|\, k,\ell\in I\neq(k,\ell)\}.\]
	Then, for each $I\in\I_{k,\ell}$, system $k$ contains the equations
	\beq\label{part_k_syst}
		u^{k,I}_{x_i}(x\,'^{\, k})=F^I_i(x\,'^{\, k}_0;U^{k,I}_i(x\,'^{\, k}))\qquad i\in I\minus \{k,\ell\},
	\eeq
	and system $\ell$ contains the equations
	\beq\label{part_ell_syst}
		u^{\ell,I}_{x_i}(x\,'^{\, \ell})=F^I_i(x\,'^{\, \ell}_0;U^{\ell,I}_i(x\,'^{\, \ell}))\qquad i\in I\minus \{k,\ell\}.
	\eeq
	We now let
	\[\tilde x:=x\,'^{\, k,l}=(x_1,\dots,x_{k-1},x_{k+1},\dots,x_{\ell-1},x_{\ell+1},\dots,x_n),\]
	and define the functions
	\[v^I(\tilde x):=u^{k,I}(x\,'^{\, k})|_{\{x_\ell=0\}}
	\qquad\text{and}\qquad w^I(\tilde x):=u^{\ell,I}(x\,'^{\, \ell})|_{\{x_k=0\}}.\]
	Letting 
	\[V^I_i:=\{v^J\,|\, J\supset I\minus i,\, J\neq (k,\ell)\},\]
	and 
	\[W^I_i:=\{w^J\,|\, J\supset I\minus i,\, J\neq (k,\ell)\},\]
	and restricting \eq{part_k_syst} and \eq{part_ell_syst} to $\{x_\ell=0\}$ 
	and $\{x_k=0\}$, respectively, we deduce that the sets of functions 
	\[\{v^I(\tilde x)\,|\,I\in\I_{k,\ell}\}\qquad\text{and}\qquad \{w^I(\tilde x)\,|\,I\in\I_{k,\ell}\}\]
	solve the following system-data pairs:
	\begin{align*}
		& v^{I}_{x_i}(\tilde x) =F^I_i(x\,'^{\, k\ell}_{00};[\bar u^{(k,\ell)}(\tilde x)],
		V^{I}_i(\tilde x))\qquad\text{for $i\in I\minus \{k,\ell\}$}\\
		& v^{I}(\tilde x)|_{\{x_{I'^{kl}}=0\}} = \bar u^I(x_{I^c})
	\end{align*}
	and 
	\begin{align*}
		& w^{I}_{x_i}(\tilde x) =F^I_i(x\,'^{\, k\ell}_{00};[\bar u^{(k,\ell)}(\tilde x)],
		W^{I}_i(\tilde x))\qquad\text{for $i\in I\minus \{k,\ell\}$}\\
		& w^{I}(\tilde x)|_{\{x_{I'^{kl}}=0\}} = \bar u^I(x_{I^c}),
	\end{align*}
	respectively. Here we are using the same notational convention as earlier:
	the square-bracketed terms are present only if $I$ is one of the 3-indices
	$(i,k,\ell)$, $(k,i,\ell)$ or $(k,\ell,i)$. 
	
	We note that the two systems above are identical, and we claim that they are 
	closed Darboux systems in $n-2$ independent variables. 
	The argument for this claim is entirely similar to that for Claim 
	\ref{claim:syst_ell} above; we omit the details.
	Since the systems above for the $v^I$ and the $w^I$ 
	have the same data, it follows from the uniqueness part of Darboux's 
	theorem (for $n-2$ independent variables) that $v^I\equiv w^I$, i.e.\ 
	\eq{key_id} holds.
\end{proof}

\subsection{System $n$} 
To generate our candidate for the solution of the original system we 
proceed to define system $n$, which will be a determined system
in all $n$ independent variables $x=(x_1,\dots,x_n)$. We denote its 
unknowns by $\hat u^I$ where $I$ now ranges over all of $\I$. 
For each $I\in\I$, 
and with $i:=\min I$, we copy the $u_{x_i}$-equation from the original 
system \eq{gen_form_3}
and then rename any unknown $u^K$ appearing in it as $\hat u^K$:
\beq\label{n_syst}
	\hat u^I_{x_i}(x)=F^I_i(x;\hat U^I_i(x))\qquad\text{with $i=\min I$,}
\eeq
where 
\[\hat U^I_i:=\{\hat u^K\,|\, K\in\I,\, K\supset I\minus i\}.\]
The data for system $n$ are prescribed as follows:
\begin{enumerate}
	\item if $I=(i)$ for some $i\in\{1,\dots,n\}$, then 
	\beq\label{n_th_syst_data_a}
		\hat u^{(i)}(x)|_{\{x_i=0\}}:=\bar u^{(i)}(x\,'^{\, i});  
	\eeq
	\item if $|I|>1$ and $\min I=i$, then
        \beq\label{n_th_syst_data_b}
	        	\hat u^{I}(x)|_{\{x_i=0\}}:= u^{i,I}(x\,'^{\, i}).
        \eeq
\end{enumerate}
Note that we use the solutions for systems 1 through $n-1$ to 
assign the data for system $n$.
Since \eq{n_syst} contains exactly one equation for each of the 
unknowns $\hat u^I$, where $I$ ranges over $\I$, it follows that system $n$ is a closed
and determined system. Also, the data are of the form required by Darboux's 
first theorem. Hence, according to Theorem \ref{darb_1}, system $n$
with the data in \eq{n_th_syst_data_a}-\eq{n_th_syst_data_b} has 
a unique, local solution $\hat U$ defined in a full $n$-dimensional neighborhood 
of $0\in\RR^{n}_{x}$.

It remains to verify that the solution $\hat U$ of system $n$ in fact solves the
original problem \eq{gen_form_3} \& \eq{gen_data}. For this, consider first the
data requirements, which are straightforward to verify. Indeed, if $|I|=1$, say $I=(i)$, 
then according to \eq{n_th_syst_data_a} we have
\[\hat u^I(x)|_{\{x_I=0\}}\equiv \hat u^{(i)}(x)|_{\{x_i=0\}}
=\bar u^{(i)}(x\,'^{\, i})\equiv \bar u^I(x_{I^c}),\]
and if $|I|>1$, with $\min I=i$, then according to 
\eq{n_th_syst_data_b} and \eq{ell_data} we have
\[\hat u^I(x)|_{\{x_I=0\}}
\equiv\big(\hat u^I(x)|_{\{x_i=0\}}\big)|_{\{x_{I^{'i}}=0\}}
=u^{i,I}(x\,'^{\, i})|_{\{x_{I^{'i}}=0\}}
= \bar u^I(x_{I^c}).\]
This shows that $\hat U$ takes on the original data \eq{gen_data}.
	
To show that $\hat U$ solves all equations in \eq{gen_form_3}, we start by noting 
that, by construction, the solutions $\hat u^I$ ($I\in\I$) of the $n$th 
system solve all equations in \eq{gen_form_3} with $i=\min I$.
The remaining equations in the original system 
are those equations in \eq{gen_form_3} where an unknown $u^I$ is 
differentiated with respect to an $x_i$ where $i\in I$ and $i>\min I$. 

To verify these remaining equations we shall employ the same strategy 
used by Darboux. For this we define, for each $I\in\I$, the functions
\beq\label{Delta}
	\Delta^I_i(x):=\hat u^I_{x_i}(x)-F^I_i(x;\hat U^I_i(x))
	\qquad\text{for $i\in I$},
\eeq
and set
\beq\label{D}
	\mathcal D:=\{\Delta^I_i\,|\, i\in I\in\I,\, i>\min I\}.
\eeq
Note that $\Delta_i^I\equiv 0$ whenever $i=\min I$. 
We will show, through an application of Theorem \ref{darb_1}, that 
also all $\Delta^I_i$ in $\mathcal D$ vanish identically 
on a full neighborhood of $0\in\RR^n$. By definition of $\Delta^I_i$ 
this establishes that $\hat U$ solves all the equations in the original system 
\eq{gen_form_3}.
Verifying that all $\Delta^I_i$ in $\mathcal D$ vanish identically near $x=0$, 
will be accomplished in two steps by showing that:
\begin{enumerate}
	\item[(A)] whenever $\Delta^I_i\in\mathcal D$, 
	then the partial derivative
	\[\partial_{x_\ell}\Delta^I_i\qquad\text{where $\ell=\min I,$}\]
	is given as a linear combination (with variable coefficients) of other
	functions $\Delta^K_k$ in $\mathcal D$;
	\item[(B)] each $\Delta^I_i\in \mathcal D$ vanishes identically 
	along $\{x_\ell=0\}$ for $\ell=\min I$.
\end{enumerate}
It follows from (A) and (B) that the functions $\Delta^I_i$ in $\mathcal D$ 
satisfy a determined system of linear PDEs of the 
type covered by Darboux's first theorem, and with vanishing data.  
According to the uniqueness part of that theorem, it follows 
that all $\Delta^I_i$ vanish identically near $0\in \RR^n$, which
is the desired conclusion.

\subsubsection{Proof of (A)}
We now fix $I\in\I$ and $i\in I$ such that $\ell:=\min I<i$.  
Then, as $\ell=\min I$ and $\hat u^I$ is part of the solution to the $n$th 
system, we obtain:
\begin{align}
	\partial_{x_\ell}\Delta^I_i(x) 
	&= \left[\hat u^I_{x_\ell}(x)\right]_{x_i}
	-\left[F^I_i(x;\hat U^I_i(x))\right]_{x_\ell}\nn\\
	&= \left[F^I_\ell(x;\hat U^I_\ell(x))\right]_{x_i}
	-\left[F^I_i(x;\hat U^I_i(x))\right]_{x_\ell}\nn\\
	&= F^I_{\ell,x_i}(x;\hat U^I_\ell(x)) 
	+ \sum_{K\supset I\minus\ell} 
	F^I_{\ell,u^K}(x;\hat U^I_\ell(x)) \hat u^K_{x_i}(x)\nn\\
	&\quad - F^I_{i,x_\ell}(x;\hat U^I_i(x))
	- \sum_{K\supset I\minus i} 
	F^I_{i,u^K}(x;\hat U^I_i(x)) \hat u^K_{x_\ell}(x)\nn\\
	&=\sum_{K\supset I\minus\ell} 
	 A^I_{\ell,K}(x)\Delta^K_i(x)
	-\sum_{K\supset I\minus i} 
	A^I_{i,K}(x) \Delta^K_\ell(x),\label{d_sum}
\end{align}
where in the last step we have used the integrability conditions 
\eq{int_cond2}, and also introduced the coefficient functions
\[A^I_{\ell,K}(x):=F^I_{\ell,u^K}(x;\hat U^I_\ell(x)).\]
Note that, in the first sum in \eq{d_sum}, $K\supset  I\minus\ell\ni i$, such 
that $\Delta^K_i$ is defined by \eq{Delta}; ditto for the second sum in 
\eq{d_sum}, with $i$ and $\ell$ interchanged.
Finally, since the $\hat u^K$ solve the $n$th system, we have that:
\begin{itemize}
	\item those $\Delta^K_i$ in the first sum in \eq{d_sum} 
	for which $\min K=i$ vanish identically, and
	\item those $\Delta^K_\ell$ in the second sum in \eq{d_sum} 
	for which $\min K=\ell$ vanish identically.
\end{itemize}
This shows that whenever $\Delta^I_i\in \mathcal D$ and $\ell=\min I$, then
$\partial_{x_\ell}\Delta^I_i$ is a linear combination of functions 
from $\mathcal D$.  \hfill{q.e.d.(A)}

\subsubsection{Proof of (B)}
Fix $I$ and $i$ as above, i.e.\ $\ell:=\min I<i\in I$, and calculate:
\begin{align}
	\Delta^I_i(x)|_{\{x_\ell=0\}} &=\hat u^I_{x_i}(x\,'^{\, \ell}_0)
	-F^I_i(x\,'^{\, \ell}_0;\hat U^I_i(x\,'^{\, \ell}_0))\nn\\
	&=\del_{x_i}\left[\hat u^I(x\,'^{\, \ell}_0) \right]
	-F^I_i(x\,'^{\, \ell}_0;[\bar u^{(\ell)}(x\,'^{\, \ell})],\hat U^{\ell,I}_i(x\,'^{\, \ell}_0)),
	\label{delta}
\end{align}
where we have set 
\[\hat U^{\ell,I}_i:=\{\hat u^K\,|\, K\in\I^{\ell,I}_i\},\]
with $\I^{\ell,I}_i$ given in \eq{I^ell^I_i} (recall that $i\in I\in\I_\ell$). We then split
$\I^{\ell,I}_i$ into two parts as follows:
\[\I^{\ell,I}_{i,a}:=\{K\in \I^{\ell,I}_i\,|\, \min K=\ell\}\]
and 
\[\I^{\ell,I}_{i,b}:=\{K\in \I^{\ell,I}_i\,|\, \min K<\ell\}.\]
According to the data assignment \eq{n_th_syst_data_b} for system $n$, we have
\beq\label{key}
	\hat u^K(x\,'^{\, \ell}_0)=u^{\ell,K}(x\,'^{\, \ell})\qquad\text{whenever $K\in \I^{\ell,I}_{i,a}$.}
\eeq
We now have the following claim:
\begin{claim}\label{key_claim}
	With $I$, $\ell$, and $i$ as above, we have
	\beq\label{key_key}
		\hat u^K(x\,'^{\, \ell}_0)=u^{\ell,K}(x\,'^{\, \ell})\qquad\text{whenever $K\in \I^{\ell,I}_{i,b}$ as well.}
	\eeq
\end{claim}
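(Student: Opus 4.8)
The plan is to prove a slightly stronger statement that is really what the argument needs: for \emph{every} $K\in\I_\ell$ one has $\hat u^K(x\,'^{\, \ell}_0)=u^{\ell,K}(x\,'^{\, \ell})$. Since $\I^{\ell,I}_{i,b}\subseteq\I_\ell$, this contains Claim \ref{key_claim}. Those $K$ with $\min K=\ell$ require no work: for them the identity is exactly the data assignment \eq{n_th_syst_data_b} (this is \eq{key}). So the content lies entirely in $\I_\ell^{(b)}:=\{K\in\I_\ell\,|\,\min K<\ell\}$, and I would handle all such $K$ simultaneously by a uniqueness argument built on Darboux's first theorem.

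The idea is to exhibit one determined system in the $n-1$ variables $x\,'^{\, \ell}$ that is solved by both families $\{\hat u^K(x\,'^{\, \ell}_0)\}_{K\in\I_\ell^{(b)}}$ and $\{u^{\ell,K}(x\,'^{\, \ell})\}_{K\in\I_\ell^{(b)}}$. For $K\in\I_\ell^{(b)}$ put $m:=\min K<\ell$; the relevant equation is the single $x_m$-equation $V^K_{x_m}=F^K_m(x\,'^{\, \ell}_0;\dots)$, which is exactly of the form \eq{Syst1} in Theorem \ref{darb_1}. By \eq{I^I_i}-\eq{U^I_i} the unknown arguments of $F^K_m$ are the $u^J$ with $J\supset K\minus m$; since $\ell\in K\minus m$, each such $J$ contains $\ell$, so $J$ is $(\ell)$, or lies in $\I_\ell^{(a)}:=\{J\in\I_\ell\,|\,\min J=\ell\}$, or lies in $\I_\ell^{(b)}$. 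The $(\ell)$-argument (which occurs only if $K=(m,\ell)$) equals $\bar u^{(\ell)}(x\,'^{\, \ell})$ for both families, by the bracket convention in \eq{syst_ell} and by \eq{n_th_syst_data_a} for the hatted family; and the $\I_\ell^{(a)}$-arguments coincide for the two families, since \eq{n_th_syst_data_b} gives $\hat u^J(x\,'^{\, \ell}_0)=u^{\ell,J}(x\,'^{\, \ell})$ outright when $\min J=\ell$. Treating these as fixed, matching coefficients leaves a genuinely closed determined system in the $\I_\ell^{(b)}$-unknowns. That $\{u^{\ell,K}\}$ solves it is immediate, as $m\in K\minus\ell$ and so system $\ell$ prescribes $u^{\ell,K}_{x_m}$ via \eq{syst_ell}; that $\{\hat u^K(x\,'^{\, \ell}_0)\}$ solves it follows because system $n$ prescribes $\hat u^K_{x_m}$ through \eq{n_syst} (as $m=\min K$), and restriction to $\{x_\ell=0\}$ is legitimate since $m\neq\ell$.

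The step I expect to be the crux is matching the Cauchy data of the two families on $\{x_m=0\}$. This is not transparent: the data naturally attached to $\hat u^K$ sit on $\{x_m=0\}$ and point, via \eq{n_th_syst_data_b}, to the solution $u^{m,K}$ of system $m$, whereas the data defining $u^{\ell,K}$ were prescribed only on the smaller plane $\{x_{K\minus\ell}=0\}$, so the two must be reconciled on all of $\{x_m=0\}$. Here I would invoke the cross-consistency Claim \ref{key_id_claim}: restricting \eq{n_th_syst_data_b} (valid since $\min K=m$) further to $\{x_\ell=0\}$ gives $\hat u^K|_{\{x_m=0,\,x_\ell=0\}}=u^{m,K}(x\,'^{\, m})|_{\{x_\ell=0\}}$, and \eq{key_id} applied to the indices $m,\ell\in K$ rewrites the right-hand side as $u^{\ell,K}(x\,'^{\, \ell})|_{\{x_m=0\}}$. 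Hence $\hat u^K(x\,'^{\, \ell}_0)$ and $u^{\ell,K}(x\,'^{\, \ell})$ carry the same continuous data on $\{x_m=0\}$.

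With both families solving the same determined system of Theorem \ref{darb_1} type, with Lipschitz right-hand sides (the $F^K_m$ being $C^2$) and identical continuous data on the hyperplanes $\{x_{\min K}=0\}$, the uniqueness half of Theorem \ref{darb_1} forces $\hat u^K(x\,'^{\, \ell}_0)=u^{\ell,K}(x\,'^{\, \ell})$ for all $K\in\I_\ell^{(b)}$ on a common neighborhood of $0\in\RR^{n-1}$. This is \eq{key_key} and completes the proof of Claim \ref{key_claim}.
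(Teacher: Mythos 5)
Your proposal is correct and follows essentially the same route as the paper's own proof: freeze the $(\ell)$-argument and the $\min K=\ell$ arguments (via \eq{n_th_syst_data_a}, \eq{n_th_syst_data_b}), observe that both $\{u^{\ell,K}\}$ and $\{\hat u^K(x\,'^{\, \ell}_0)\}$ then solve the same determined system of the form \eq{Syst1}, match the Cauchy data on $\{x_{\min K}=0\}$ using \eq{n_th_syst_data_b} together with Claim \ref{key_id_claim}, and conclude by the uniqueness part of Theorem \ref{darb_1}. The only (harmless) deviation is that you run the argument over all of $\{K\in\I_\ell\,:\,\min K<\ell\}$ rather than the paper's smaller set $\I^{\ell,I}_{i,b}$, which yields a slightly stronger statement with the same mechanism.
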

\noindent Assuming this result for now (proved below), we get from \eq{delta} that
\[\Delta^I_i(x)|_{\{x_\ell=0\}} =\del_{x_i}\left[u^{\ell,I}(x\,'^{\, \ell}) \right]
-F^I_i(x\,'^{\, \ell}_0;[\bar u^\ell(x\,'^{\, (\ell)})],U^{\ell,I}_i(x\,'^{\, \ell})).\]
This last expression vanishes identically since $u^{\ell,I}$ is part of the 
solution to system $\ell$, verifying the claim in (B).

It only remains to argue for Claim \ref{key_claim}. With $I$, $\ell$, and $i$ as above, 
we introduce the following notation: whenever $K\in \I^{\ell,I}_{i,b}$,
we set
\[v^K(x\,'^{\, \ell}):=u^{\ell,K}(x\,'^{\, \ell})\qquad\text{and}\qquad w^K(x\,'^{\, \ell}):=\hat u^K(x\,'^{\, \ell}_0).\]
Claim \ref{key_claim} amounts to the statement that $v^K(x\,'^{\, \ell})=w^K(x\,'^{\, \ell})$.
We shall establish that the functions $v^K$ and $w^K$ satisfy the same 
determined system with the same data. The uniqueness part of Darboux's first theorem 
will then yield the conclusion.

We have, with $K\in \I^{\ell,I}_{i,b}$ and $k:=\min K$, that
\begin{align}
	v^K_{x_k}(x\,'^{\, \ell}) = u^{\ell,K}_{x_k}(x\,'^{\, \ell}) 
	&= F_k^K(x\,'^{\, \ell}_0;[\bar u^{(\ell)}(x\,'^{\, \ell})],U^{\ell,K}_k(x\,'^{\, \ell}))\nn\\
	&= F_k^K(x\,'^{\, \ell}_0;[\bar u^{(\ell)}(x\,'^{\, \ell})],U^{\ell,K}_{k,a}(x\,'^{\, \ell}),U^{\ell,K}_{k,b}(x\,'^{\, \ell}))\nn\\
	&=: G_k^K(x\,'^{\, \ell}_0;V^{\ell,K}_{k,b}(x\,'^{\, \ell})),\label{v^K}
\end{align}
where we regard $U^{\ell,K}_{k,a}(x\,'^{\, \ell})$ as a set of known functions, and we have set
$V^{\ell,K}_{k,b}=U^{\ell,K}_{k,b}$. 
We note that the resulting 
system of equations contains exactly one equation for each $K\in \I^{\ell,I}_{i,b}$. The data for this system
is given as
\[v^K(x\,'^{\, \ell})|_{\{x_k=0\}}=u^{\ell,K}(x\,'^{\, \ell})|_{\{x_k=0\}}.\]
Similarly,
\begin{align}
	w^K_{x_k}(x\,'^{\, \ell}) = \hat u^K_{x_k}(x\,'^{\, \ell}_0) 
	&= F_k^K(x\,'^{\, \ell}_0;[\bar u^{(\ell)}(x\,'^{\, \ell})],\hat U^{\ell,K}_k(x\,'^{\, \ell}_0))\nn\\
	&= F_k^K(x\,'^{\, \ell}_0;[\bar u^{(\ell)}(x\,'^{\, \ell})],\hat U^{\ell,K}_{k,a}(x\,'^{\, \ell}),\hat U^{\ell,K}_{k,b}(x\,'^{\, \ell})),\nn
\end{align}
where we have set
\[\hat U^{\ell,K}_{k,a}:=\{\hat u^K\,|\, K\in\I^{\ell,I}_{i,a}\}\qquad\text{and}\qquad
\hat U^{\ell,K}_{k,b}:=\{\hat u^K\,|\, K\in\I^{\ell,I}_{i,b}\}.\]
According to \eq{key} we have 
\[\hat U^{\ell,K}_{k,a}(x\,'^{\, \ell})\equiv U^{\ell,K}_{k,a}(x\,'^{\, \ell}),\]
so that 
\begin{align}
	w^K_{x_k}(x\,'^{\, \ell}) = \hat u^K_{x_k}(x\,'^{\, \ell}_0) 
	&= F_k^K(x\,'^{\, \ell}_0;[\bar u^{(\ell)}(x\,'^{\, \ell})],U^{\ell,K}_{k,a}(x\,'^{\, \ell}),\hat U^{\ell,K}_{k,b}(x\,'^{\, \ell}))\nn\\
	&\equiv G_k^K(x\,'^{\, \ell}_0;W^{\ell,K}_{k,b}(x\,'^{\, \ell})),\label{w^K}
\end{align}
where $G^K_k$ was defined in \eq{v^K} and we have set $W^{\ell,K}_{k,b}=\hat U^{\ell,K}_{k,b}$. 
Again, these equations for the $w^K$, $K\in \I^{\ell,I}_{k,b}$  form a determined system.
Finally, the data for $w^K$ are given as follows:
\[w^K(x\,'^{\, \ell})|_{\{x_k=0\}}=\hat u^{K}(x\,'^{\, \ell}_0)|_{\{x_k=0\}}
\equiv \hat u^{K}(x\,'^{\, k}_0)|_{\{x_\ell=0\}}=u^{k,K}(x\,'^{\, k})|_{\{x_\ell=0\}},\]
where we have used that $k=\min K$, together with the data assignment \eq{n_th_syst_data_b}
for $\hat u^K$. Finally, we apply Claim \ref{key_id_claim} to conclude that the data for $w^K$ are given by
\[w^K(x\,'^{\, \ell})|_{\{x_k=0\}}=u^{\ell,K}(x\,'^{\, \ell})|_{\{x_k=0\}}.\]
Thus, the $v^K$ and the $w^K$ solve the same determined system with the same data,
and the uniqueness part of Darboux's first theorem implies that $v^K\equiv w^K$ for each $K\in \I^{\ell,I}_{i,b}$.
\hfill{q.e.d.(B)}

The uniqueness part of Darboux's theorem follows from the inductive construction above.
This concludes the proof of Theorem \ref{d_thm}.

%
%

\begin{bibdiv}
\begin{biblist}
\bib{bcggg}{book}{
   author={Bryant, R. L.},
   author={Chern, S. S.},
   author={Gardner, R. B.},
   author={Goldschmidt, H. L.},
   author={Griffiths, P. A.},
   title={Exterior differential systems},
   series={Mathematical Sciences Research Institute Publications},
   volume={18},
   publisher={Springer-Verlag},
   place={New York},
   date={1991},
   pages={viii+475},
   isbn={0-387-97411-3},
   review={\MR{1083148 (92h:58007)}},
}
\bib{dar}{book}{
   author={Darboux, Gaston},
   title={Le\c cons sur les syst\`emes orthogonaux et les coordonn\'ees
   curvilignes. Principes de g\'eom\'etrie analytique},
   language={French},
   series={Les Grands Classiques Gauthier-Villars. [Gauthier-Villars Great
   Classics]},
   note={The first title is a reprint of the second (1910) edition; the
   second title is a reprint of the 1917 original;
   Cours de G\'eom\'etrie de la Facult\'e des Sciences. [Course on Geometry
   of the Faculty of Science]},
   publisher={\'Editions Jacques Gabay, Sceaux},
   date={1993},
   pages={600},
   isbn={2-87647-016-0},
   review={\MR{1365963}},
}
\bib{evans}{book}{
   author={Evans, Lawrence C.},
   title={Partial differential equations},
   series={Graduate Studies in Mathematics},
   volume={19},
   edition={2},
   publisher={American Mathematical Society, Providence, RI},
   date={2010},
   pages={xxii+749},
   isbn={978-0-8218-4974-3},
   review={\MR{2597943}},
   doi={10.1090/gsm/019},
}
\bib{hart}{book}{
   author={Hartman, Philip},
   title={Ordinary differential equations},
   series={Classics in Applied Mathematics},
   volume={38},
   note={Corrected reprint of the second (1982) edition [Birkh\"auser,
   Boston, MA;  MR0658490 (83e:34002)];
   With a foreword by Peter Bates},
   publisher={Society for Industrial and Applied Mathematics (SIAM),
   Philadelphia, PA},
   date={2002},
   pages={xx+612},
   isbn={0-89871-510-5},
   review={\MR{1929104}},
   doi={10.1137/1.9780898719222},
}
\bib{jk1}{article}{
   author={Jenssen, Helge Kristian},
   author={Kogan, Irina A.},
   title={Systems of hyperbolic conservation laws with prescribed
   eigencurves},
   journal={J. Hyperbolic Differ. Equ.},
   volume={7},
   date={2010},
   number={2},
   pages={211--254},
   issn={0219-8916},
   review={\MR{2659735 (2011i:35152)}},
   doi={10.1142/S021989161000213X},
}
\bib{spi}{book}{
   author={Spivak, Michael},
   title={A comprehensive introduction to differential geometry. Vol. I},
   edition={2},
   publisher={Publish or Perish Inc.},
   place={Wilmington, Del.},
   date={1979},
   pages={xiv+668},
   isbn={0-914098-83-7},
   review={\MR{532830 (82g:53003a)}},
}
\bib{spi5}{book}{
   author={Spivak, Michael},
   title={A comprehensive introduction to differential geometry. Vol. V},
   edition={2},
   publisher={Publish or Perish, Inc., Wilmington, Del.},
   date={1979},
   pages={viii+661},
   isbn={0-914098-83-7},
   review={\MR{532834}},
}
\bib{yang}{article}{
   author={Yang, Deane},
   title={Involutive hyperbolic differential systems},
   journal={Mem. Amer. Math. Soc.},
   volume={68},
   date={1987},
   number={370},
   pages={xii+93},
   issn={0065-9266},
   review={\MR{897707}},
   doi={10.1090/memo/0370},
}
\end{biblist}
\end{bibdiv}

\end{document}